\numberwithin{equation}{section} 
\numberwithin{figure}{section} 
\theoremstyle{plain}
\newtheorem*{thm*}{Theorem}
\theoremstyle{plain}
\newtheorem{thm}{Theorem}[section]
\theoremstyle{definition}
\newtheorem{defn}[thm]{Definition}
\theoremstyle{plain}
\newtheorem{lem}[thm]{Lemma}
\theoremstyle{plain}
\newtheorem{prop}[thm]{Proposition}
\theoremstyle{plain}
\theoremstyle{remark}
\theoremstyle{remark}
\newtheorem*{acknowledgement*}{Acknowledgement}
\begin{document}

\title{Frobenius integrability and Finsler metrizability for
  $2$-dimensional sprays}

\author[Bucataru]{Ioan Bucataru}
\address{Faculty of  Mathematics \\ Alexandru Ioan Cuza University \\ Ia\c si, 
  Romania}
\email{bucataru@uaic.ro}
\urladdr{http://www.math.uaic.ro/\textasciitilde{}bucataru/}
\author[Cre\c tu]{Georgeta Cre\c tu}
\address{Faculty of  Mathematics \\ Alexandru Ioan Cuza University \\ Ia\c si, 
  Romania}
\email{Georgeta.Cretu@math.uaic.ro}
\author[Taha]{Ebtsam H. Taha}
\address{Department of Mathematics, Faculty of Science,
Cairo University, 12613 Giza, Egypt}
\email{ebtsam@sci.cu.edu.eg, ebtsam.h.taha@hotmail.com}

\date{\today}

\begin{abstract}
  For a $2$-dimensional non-flat spray we associate a Berwald frame and a
  $3$-dimensional distribution that we call the Berwald distribution. The Frobenius integrability of the Berwald
  distribution characterises the Finsler metrizability of the given spray. In the integrable case, the
  sought after Finsler function is provided by a closed, homogeneous $1$-form from the
  annihilator of the Berwald distribution. We discuss both the
  degenerate and non-degenerate cases using the fact that the regularity of the Finsler
  function is encoded into a regularity condition of a
  $2$-form, canonically associated to the given spray. The integrability of the Berwald distribution and the
  regularity of the $2$-form have simple and useful expressions in terms of the Berwald frame.  
\end{abstract}

\subjclass[2000]{53C60, 53B40, 58E30, 49N45}

\keywords{spray, Finsler metrizability, Berwald
  frame, Frobenius integrability}

\maketitle

\section{Introduction}
\label{Introduction}

The inverse problem of Lagrangian mechanics requires to decide whether
or not a given system of second order ordinary differential equations (SODE or
semispray) can be derived from a variational principle. In general, the problem is far from being solved. It has been
completely solved in dimension $1$ by Darboux \cite{Darboux94} and
in dimension $2$ by Douglas \cite{Douglas41}.

In the case when the given SODE is homogeneous (i.e, a spray), the
problem is known as the Finsler metrizability problem
\cite{BM14, GM00, KS85, SV02}. A solution to this problem has been proposed in the analytic case by
Muzsnay \cite{Muzsnay06}, by studying the formal integrability of
the associated Euler-Lagrange PDE. 

In this paper we provide a constructive solution to the $2$-dimensional case of the Finsler
metrizability problem in the non-flat case. We associate to a given non-flat
$2$-dimensional spray two canonical
geometric structures that contain all the information about the
Finsler metrizability of the spray and, in the affirmative case, they
give the Finsler function. One of these structures is a $3$-dimensional
regular distribution, called the \emph{Berwald distribution},
whose integrability provides a candidate for the Finsler function that
we look for. The second structure is a $2$-form, whose rank gives the
information about the regularity of the Finsler candidate. A key
aspect in our approach is the use of a canonical frame,
called the \emph{Berwald frame}, associated to the given spray that
makes easier to express the integrability of the Berwald distribution
as well as the rank of the $2$-form. Therefore, we reformulate and
solve the Finsler metrizability problem in terms of some properties of the
Berwald frame. In the integrable case, the Berwald distribution
coincides with the holonomy distribution \cite{Muzsnay06} and the number of solutions
we provide agrees with the metrizability freedom of a spray introduced in \cite{EM16}.

The Berwald frame has been introduced first, locally, for a
$2$-dimensional Finsler metric in \cite{Berwald41}. An intrinsic
formulation of the Berwald frame, in the Finslerian setting, has been
provided in \cite{VV01}, see also \cite[\S 9.9.1]{SLK14}. Such a frame has been rediscovered recently
for a background Riemannian metric in \cite{Crampin14} to give an
alternative proof of the projective Finsler metrizability property of
an arbitrary $2$-dimensional spray. In our case, we define the Berwald
frame directly for an arbitrary spray and use its properties to obtain
information about the Finsler metrizability of the given spray.   

For the geodesic spray of a Finsler function, it is known that one can
always construct an integrable distribution, transverse to the Liouville vector field,
that is tangent to the indicatrix of the Finsler function
\cite{BF06}. In dimension $2$, our Theorems \ref{thm1} and \ref{thm2} provide a
characterization of the Finsler metrizability in terms of the
integrability of such a distribution. This distribution is the Berwald distribution
and in the integrable case it is tangent to the indicatrix of the
Finsler function that metrizes the given spray.

\section{Sprays and Finsler functions, a geometric setting}
\label{Sprays}

For a $2$-dimensional smooth, orientable and real manifold $M$, we denote by
$(TM, \pi, M)$ its tangent bundle and by $T_0M:=TM\setminus \{0\}$ the
total space of the tangent bundle with the zero section removed. Local coordinates $(x^i)$ on $M$ induce
local coordinates $(x^i, y^i)$ on $TM$ and $T_0M$. There are some
canonical structures that naturally live on $TM$. The vertical
distribution, $u\in TM \mapsto V_uTM:=\operatorname{Ker}(\pi_{\ast})_u
\subset T_uTM$ is a regular, integrable, $2$-dimensional distribution, and locally generated
by $(\partial/\partial y^1, \partial/\partial y^2)$. The Liouville
(dilation) vector field ${\mathcal C}=y^i\partial/\partial y^i$ is a
vertical vector field whose one-parameter group of diffeomorphisms is
generated by the positive homotheties of the fibres. The tangent structure
(vertical endomorphism) is given by $J=\partial/\partial y^i\otimes
dx^i.$

\subsection{A geometric setting for sprays} 
A spray (i.e., a homogeneous SODE) on $M$ is a vector field $S\in {\mathfrak
  X}(T_0M)$ that satisfies $JS={\mathcal C}$ (is a second order vector
field) and $[{\mathcal C}, S]=S$ (it is a positively $2$-homogeneous vector
field, and we will refer to this by saying that it is $2^{+}$-homogeneous). Locally, a spray can be expressed as follows:
\begin{eqnarray*}
S=y^i\frac{\partial}{\partial x^i} - 2G^i\frac{\partial}{\partial y^i}
  \label{spray}, \end{eqnarray*} 
where the functions $G^i$ are locally defined, and $2^{+}$-homogeneous in
the fiber coordinates.

To a given spray $S$, one can associate, using the Fr\"olicher-Nijenhuis formalism, a geometric setting including a
nonlinear connection, dynamical covariant derivative and curvature
tensors \cite{BD09, Grifone72a, GM00, Szilasi03}.

A spray $S$ induces a \emph{nonlinear connection} through an endomorphism
$\Gamma = [J, S]$ on $T_0M$ \cite{Grifone72a}. The connection $\Gamma$
is an \emph{almost product structure}, which means that
$\Gamma^2=\operatorname{Id}$, and induces two projectors 
\begin{eqnarray}
h=\frac{1}{2}\left(\operatorname{Id}+[J,S]\right), \quad
  v=\frac{1}{2}\left(\operatorname{Id} - [J,S]\right). \label{hv}
\end{eqnarray}
The projector $v$ corresponds to the vertical distribution $VTM$, while
$h$ induces a horizontal distribution $HTM$, which is supplementary to
the vertical distribution. Locally, the two projectors can be
expressed as follows:
\begin{eqnarray*}
h=\frac{\delta}{\delta x^i} \otimes dx^i, \ v=\frac{\partial}{\partial
  y^i}\otimes \delta y^i, \ \frac{\delta}{\delta
  x^i}:=\frac{\partial}{\partial x^i} - N^j_i \frac{\partial}{\partial
  y^j}, \ \delta y^i:=dy^i + N^i_jdx^j,\ N^i_j:=\frac{\partial
  G^i}{\partial y^j}. \label{hvlocal} 
\end{eqnarray*} 

The connection induced by a spray can induces an
\emph{almost complex structure} on $T_0M$ given by 
\begin{align}
{\mathbb F}=h\circ [S, h] - J = \frac{\delta}{\delta x^i} \otimes
  \delta y^i - \frac{\partial}{\partial y^i} \otimes dx^i, \label{acs}
\end{align}
see, e.g. \cite[(3.14)]{GM00}.

The \emph{dynamical covariant derivative} $\nabla$, induced by a spray $S$, is a tensor derivation on
$T_0M$, whose action on functions and vector fields is given by,
\cite[\S 3.2]{BD09}, 
\begin{eqnarray}
\nabla f=S(f), \ \nabla X=h[S, hX]+v[S, vX], \  f\in
  C^{\infty}(T_0M), \  X\in {\mathfrak X}(T_0M). \label{nabla}
\end{eqnarray} 
The \emph{Jacobi endomorphism}, of a spray $S$ is the vector valued $1$-form $\Phi=v\circ [S,
h]$, whose local expression is 
\begin{eqnarray*}
\Phi = R^i_j\frac{\partial}{\partial y^i}\otimes dx^j, \
  R^i_j=2\frac{\partial G^i}{\partial x^j} -
  S\left(\frac{\partial G^i}{\partial y^j}\right) - \frac{\partial
  G^i}{\partial y^k}\frac{\partial G^k}{\partial y^j}. \label{Rij} \end{eqnarray*}
A spray $S$ is called \emph{isotropic} if there exist a function $\rho
\in C^{\infty}(T_0M)$ and a semi-basic $1$-form
$\alpha=\alpha_idx^i\in \Lambda^1(T_0M)$ such that 
\begin{eqnarray}
\Phi=\rho J - \alpha \otimes {\mathcal C} \Longleftrightarrow
  R^i_j=\rho \delta^i_j - \alpha_j y^i. \label{isotropic} \end{eqnarray}
The function $\rho=\operatorname{Tr}(\Phi)=R^1_1+R^2_2$, which is called the \emph{Ricci scalar}, is
related to the semi-basic $1$-form $\alpha$ by $\rho=i_S\alpha
= \alpha_1y^1+\alpha_2y^2$. 

The homogeneity property of a spray $S$ is inherited by all associated
geometric structures. The Jacobi endomorphism is $2^{+}$-homogeneous and
therefore the Ricci scalar $\rho$ is $2^{+}$-homogeneous, while the
semi-basic $1$-form $\alpha$ is $1^{+}$-homogeneous.

It is known that $2$-dimensional sprays
are always isotropic, see \cite[Lemma 8.1.10]{Shen01} or
\cite[Corollary 8.3.11]{SLK14}, which means that their Jacobi endomorphism is
given by formula \eqref{isotropic}, with   
\begin{eqnarray}
\alpha_1=\frac{R^2_2}{y^1}=\frac{-R^2_1}{y^2}, \
  \alpha_2=\frac{R^1_1}{y^2}=\frac{-R^1_2}{y^1}. \label{alpha12} \end{eqnarray}   

In this work we will pay attention to non-flat sprays. This assumption
means that the Jacobi endomorphism $\Phi$ is nowhere vanishing, which is
equivalent to $\alpha\neq 0$ and it implies that $\rho\neq 0$. 

We will use the geometric setting described above to address the
following metrizability problem for a given spray $S$. Is there a
Finsler function whose geodesic spray is $S$? We will provide the answer
to this problem, using two geometric structures. The first structure is a
distribution, whose integrability will provide the metric candidate. The second
structure is a $2$-form that encodes information about the regularity
of the metric.     

For a spray $S$, the non-flatness assumption implies that the
distribution 
\begin{align}
{\mathcal D}=\operatorname{Im}(h)\oplus
  \operatorname{Im}(\Phi) \label{dhphi} \end{align}
is a regular, $3$-dimensional distribution. We call ${\mathcal D}$ the \emph{Berwald distribution} of the spray
$S$ and, we will prove that the spray is
metrizable if and only if ${\mathcal D}$ is integrable. 

Another distribution that can be associated to an arbitrary spray $S$
has been introduced by Muzsnay in \cite{Muzsnay06}. It is called the
\emph{holonomy distribution} ${\mathcal D}_{\mathcal H}$,
generated by horizontal vector fields and their succesive Lie
brackets. The holonomy distribution has been recently used in
\cite{EM16} to discuss the metrizability freedom of a spray. The two
distributions are related by ${\mathcal D}\subset {\mathcal
  D}_{\mathcal H}$.  

For an isotropic spray $S$ with Jacobi
endomorphism $\Phi$ given by formula \eqref{isotropic}, we consider the following
$2$-form: 
\begin{align}
\Omega=d\left(\frac{\alpha}{\rho}\right) + 2  i_{{\mathbb F}}\frac{\alpha}{\rho}
  \wedge \frac{\alpha}{\rho}. \label{omegas}
\end{align} 
In the case of metrizability, the rank of the $2$-form $\Omega$ will
provide information about the regularity of the Finsler function
which we search for. The idea of considering the $2$-form $\Omega$ has the origins
in the ``Scalar Flag Curvature''-test proposed in \cite[Theorem 3.1]{BM14}. 

\subsection{A geometric setting for Finsler functions}

For the metrizability problem of a given spray we pay attention to the
non-Riemannian case, by searching for Finslerian solutions. 

\begin{defn} \label{Finsler_def}
A continuous function $F:TM \to {\mathbb R}$ is called a \emph{Finsler
  function} if it satisfies the following conditions:
\begin{itemize}
\item[i)] $F$ is smooth and strictly positive on $T_0M$;
\item[ii)] $F$ is positively $1^{+}$-homogeneous in the fiber coordinates,
  which means $F(x,\lambda y)=\lambda F(x,y)$, $\forall \lambda \geq
  0$ and $(x,y)\in TM$;
\item[iii)] $F^2$ is a regular Lagrangian, which means that the
  following \emph{metric tensor} has maximal rank $2$ on $T_0M$
\begin{eqnarray*} g_{ij}=\frac{1}{2}\frac{\partial^2
  F^2}{\partial y^i\partial y^j}. \end{eqnarray*}
\end{itemize}
\end{defn}
If in the definition we restrict the domain of
the function $F$ to some open cone $A\subset TM$, then we speak about a
\emph{conic pseudo-Finsler function}. Also, if we replace the
regularity condition iii) above by the \emph{weak regularity condition}
$\operatorname{rank}(g_{ij})=1$, then we speak about a
\emph{degenerate Finsler function}.
 
The regularity conditions can be reformulated in terms of the
Hilbert 2-form $\omega_{F^2}:=-dd_JF^2$ as follows. The function $F$
satisfies iii), and hence it is a Finsler function, if and
only if $\omega_{F^2}$ has maximal rank $4$, which means that $\omega_{F^2}$ is a symplectic
structure. The function $F$ satisfies the weak regularity condition, and hence it
is a degenerate Finsler function if and only if the $2$-form
$\omega_{F^2}$ has rank $2$. 

A spray $S$ is \emph{Finsler metrizable} if there exists a (degenerate, conic
pseudo-) Finsler function that satisfies the Euler-Lagrange equation
\begin{align} i_Sdd_JF^2=-dF^2. \label{isomega} \end{align}
Equation \eqref{isomega} expresses the fact that the base integral curves of the
spray (homogeneous SODE) are solutions to the Euler-Lagrange equations
for $F^2$. In other words, $S$ is the geodesic spray, when $F$ is a
Finsler function. In the degenerate case, $S$ is one of the geodesic
sprays of $F$.

Consider a geodesic spray $S$ of a (degenerate) Finsler function $F$
and let $\Gamma$ be the connection induced by $S$. The Hilbert $2$-form
of $F$ can be expressed in terms of the connection $\Gamma$ as
follows:
\begin{align} \omega_{F^2}=2g_{ij} dx^i\wedge \delta
  y^j. \label{ogij} \end{align}
Within this geometric setting, we can associate to a  (degenerate)
Finsler function $F$ a (semi-) Riemannian metric on $T_0M$,
\cite{Grifone72a},  by
\begin{align} & 2G(X, Y) = -\omega_{F^2}(X, {\mathbb F}Y), \quad
  \omega_{F^2}(X,Y)=-2G(X,JY) + 2G(JX,Y), \label{gomega} \\ 
&  G = g_{ij}dx^i\otimes dx^j + g_{ij}\delta y^i\otimes \delta y^j. \nonumber
\end{align}
If $S$ is the geodesic spray of a Finsler function then we use the
metric $G$ above to construct a distribution that is orthogonal to the
Liouville vector field. The integrability of this distribution has
been shown in \cite{BF06}, the leaves of the foliation determined by
this distribution being given by the indicatrix of the Finsler
function.

The main results of our work provide a converse of this result. If for
a spray $S$, the Berwald distribution is integrable, then it will be
tangent to the indicatrix of a Finsler function that metrizes the
given spray. 

Consider a (degenerate, conic pseudo-) Finsler function $F$, a
spray $S$ satisfying equation \eqref{isomega}, and its Jacobi
endomorphism $\Phi$. We say that $S$ has \emph{scalar flag curvature} $\kappa \in
C^{\infty}(T_0M)$ if the Jacobi endomorphism $\Phi$ is of the form
\begin{eqnarray}
\Phi=\kappa F^2 J - \kappa F d_JF \otimes {\mathcal C}. \label{sfc}
\end{eqnarray}
When $\kappa$ is a constant, we say that $S$ has \emph{constant flag
  curvature}. The non-flatness assumption that we work with is
equivalent to $\kappa\neq 0$. 

The Euler-Lagrange equation \eqref{isomega}, satisfied by the geodesic spray $S$ of a
(degenerate) Finsler function $F$ is equivalent to $d_hF^2=0$. This
implies 
\begin{align*}
dF^2=d_vF^2=i_{{\mathbb F}}d_JF^2. \end{align*}
It is known that if the geodesic spray of a Finsler function is
isotropic then the Finsler function has scalar flag curvature, see
\cite[Lemma 8.2.2]{Shen01}. We will give here an alternative proof of
this result, for the more general case that includes the degenerate
Finsler functions.
\begin{prop} \label{prop1}
Consider an isotropic spray $S$ that is also a geodesic spray of a (degenerate) Finsler function $F$. Then the spray $S$ has scalar flag curvature.
\end{prop}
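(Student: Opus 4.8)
The plan is to recover both terms of \eqref{sfc} by identifying the semi-basic $1$-form $\alpha$ from the isotropy decomposition $\Phi=\rho J-\alpha\otimes{\mathcal C}$ with $\kappa F\,d_JF$, for the function $\kappa:=\rho/F^{2}$, which lies in $C^{\infty}(T_0M)$ since $F>0$ there. Once this is done, $\rho=\kappa F^{2}$ holds by the very choice of $\kappa$, and hence $\Phi=\rho J-\alpha\otimes{\mathcal C}=\kappa F^{2}J-\kappa F\,d_JF\otimes{\mathcal C}$. The tool that connects the hypothesis ``$S$ is a geodesic spray of $F$'' with the $1$-form $\alpha$ is the semi-basic $1$-form $X\mapsto dF^{2}(\Phi X)$ (that is, $i_\Phi dF^{2}$), which I will compute in two different ways and then compare.

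\emph{Step 1: $dF^{2}(\Phi X)=0$ for every vector field $X$, using only that $S$ is a geodesic spray of $F$.} From \eqref{isomega} and its reformulation $d_hF^{2}=0$ we have $dF^{2}=d_vF^{2}$, and contracting \eqref{isomega} with $S$ gives $S(F^{2})=dF^{2}(S)=-(dd_JF^{2})(S,S)=0$. Recalling $\Phi X=v[S,hX]$ (from $\Phi=v\circ[S,h]$, the Fr\"olicher--Nijenhuis identity $[S,h]X=[S,hX]-h[S,X]$, and $vh=0$), and using $dF^{2}=d_vF^{2}$ once more, one gets $dF^{2}(\Phi X)=dF^{2}(v[S,hX])=(d_vF^{2})([S,hX])=dF^{2}([S,hX])=S\big((hX)F^{2}\big)-(hX)\big(S(F^{2})\big)$. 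The first term vanishes because $(hX)F^{2}=(d_hF^{2})(X)$ is identically zero; the second vanishes because $S(F^{2})$ is identically zero. Hence $X\mapsto dF^{2}(\Phi X)$ is the zero $1$-form.

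\emph{Step 2: comparison with the isotropy formula.} From $\Phi X=\rho\,JX-\alpha(X){\mathcal C}$ we obtain $dF^{2}(\Phi X)=\rho\,dF^{2}(JX)-\alpha(X)\,dF^{2}({\mathcal C})=\rho\,(d_JF^{2})(X)-2F^{2}\alpha(X)$, where I used Euler's theorem $dF^{2}({\mathcal C})={\mathcal C}(F^{2})=2F^{2}$ for the $2^{+}$-homogeneous function $F^{2}$. Comparing with Step 1 gives $\rho\,d_JF^{2}=2F^{2}\alpha$ on $T_0M$, hence $\alpha=\frac{\rho}{2F^{2}}\,d_JF^{2}=\frac{\rho}{F^{2}}\,F\,d_JF=\kappa F\,d_JF$, where I used $d_JF^{2}=(dF^{2})\circ J=2F\,d_JF$ and $\kappa=\rho/F^{2}$. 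Together with $\rho=\kappa F^{2}$ this is exactly \eqref{sfc}, so $S$ has scalar flag curvature $\kappa$.

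The computations themselves are short; the two points that need care are the Fr\"olicher--Nijenhuis bookkeeping behind $\Phi X=v[S,hX]$ (the sign conventions there are immaterial, since the comparison equation is linear in $\Phi$) and the observation that non-degeneracy of $(g_{ij})$ is never invoked anywhere above. This last point is precisely why the argument is valid for a degenerate Finsler function, and hence covers the case left outside \cite[Lemma 8.2.2]{Shen01}.
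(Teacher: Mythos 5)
Your proposal is correct and follows essentially the same route as the paper: both arguments establish $i_\Phi dF^2=0$ from $d_hF^2=0$ and $S(F^2)=0$ (you unfold the Lie bracket $[S,hX]$ directly where the paper uses the commutator ${\mathcal L}_S d_h - d_h{\mathcal L}_S$, but this is the same computation), and then compare with the isotropy decomposition and Euler's theorem ${\mathcal C}(F^2)=2F^2$ to get $\rho\, d_JF^2=2F^2\alpha$, hence $\alpha=\kappa F\,d_JF$ with $\kappa=\rho/F^2$. No gaps; your closing remark that non-degeneracy of $(g_{ij})$ is never used is exactly the point the paper makes in extending \cite[Lemma 8.2.2]{Shen01} to the degenerate case.
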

\begin{proof}
Consider the Euler-Lagrange equation \eqref{isomega}, which is
equivalent to $d_hF^2=0$. As a consequence we obtain $S(F^2)=0$. We
also have
\begin{align*}
d_{[S,h]}F^2={\mathcal L}_S d_hF^2 - d_h{\mathcal L}_S F^2=0.
\end{align*}
Using this equation, the fact that $dF^2=d_vF^2$ and the 
definition of the Jacobi endomorphism, we obtain
\begin{align*}
0=d_{[S,h]}F^2 =i_{[S,h]} dF^2 = i_{[S,h]} d_vF^2 = i_{[S,h]} i_v dF^2
  = i_{v\circ [S,h]} dF^2 = i_{\Phi}dF^2 = d_{\Phi} F^2. 
\end{align*}
We use now the assumption that $S$ is isotropic, which means that the
Jacobi endomorphism is given by formula \eqref{isotropic}. We replace
this in the above formula to obtain 
\begin{align*}
0= d_{\Phi} F^2 = d_{\rho J - \alpha \otimes {\mathcal C}}F^2  =\rho
  d_JF^2 - \alpha {\mathcal L}_{{\mathcal C}}F^2 = \rho
  d_JF^2 - 2F^2 \alpha, \end{align*}
whence
\begin{align}
\frac{\alpha}{\rho} = \frac{d_JF^2}{2F^2}=\frac{d_JF}{F}. \label{ar}\end{align}
Using the function $\kappa=\rho/F^2$ and the formulae
\eqref{ar}, the Jacobi endomorphism can be written as follows: 
\begin{align*}
\Phi = \rho\left(J - \frac{\alpha}{\rho} \otimes {\mathcal C}\right) =
  \frac{\rho}{F^2} \left( F^2 J - F^2 \frac{d_JF}{F} \otimes {\mathcal C}\right) = \kappa \left( F^2 J - Fd_JF \otimes {\mathcal C} \right).
\end{align*} 
This shows that formula \eqref{sfc} is true, and hence the spray $S$
has scalar flag curvature $\kappa$.  
\end{proof}
For an isotropic spray $S$ that is also a geodesic spray of a
(degenerate) Finsler function $F$, in view of formula \eqref{ar},  we
obtain that the $2$-form $\Omega$, \eqref{omegas}, is related to the Hilbert $2$-form $\omega_{F^2}$
by
\begin{align}
\Omega= d\left(\frac{1}{2F^2}d_JF^2\right) + \frac{1}{2F^4} dF^2
  \wedge d_JF^2 = \frac{1}{2F^2}dd_JF^2 = \frac{-1}{2F^2}\omega_{F^2}. \label{oomega}
\end{align}

Since $2$-dimensional sprays are always isotropic, it follows using
Proposition \ref{prop1} that Finsler metrizable $2$-sprays have
scalar flag curvature.

\section{Berwald frame.}
\label{Berwald}

In \cite{Berwald41} Berwald constructed a frame on $T_0M$
canonically associated to a $2$-dimensional Finsler manifold and used to characterize projectively flat $2$-dimensional
Finsler manifold and to classify them in some particular cases:
Landsberg spaces and Finsler spaces with the main scalar a function of
position only. A detailed analysis of the role played by the Berwald frame for the geometry of a
$2$-dimensional Finsler space is presented in \cite[Section
3.5]{AIM93}. Recently, in \cite{Crampin14}, Crampin rediscovered such a 
frame, which he associated to a given Riemannian metric, to give
a new constructive proof of the fact that any $2$-dimensional spray is 
projectively Finsler metrizable. 

In this section we will show that such a frame, which we will call the
\emph{Berwald frame}, can be associated to any $2$-dimensional 
non-flat spray. In the next section we will prove that a spray is
metrizable if and only if the Berwald distribution is integrable and
the $2$-form $\Omega$ satisfies some regularity
conditions. Both the integrability and the regularity conditions can
be expressed in terms of the Berwald
frame. 

In the following subsections we study the regularity conditions and
the commutation formulae satisfied by the Berwald frame in three cases: for a spray in
general, for a Finsler metrizable spray, and for a spray metrizable
by a degenerate Finsler function. 

\subsection{Berwald frame for a spray.} \label{Bs}
For a $2$-dimensional spray $S$, we consider the geometric setting
described in the previous section. As we already mentioned, $S$ is
isotropic and therefore its Jacobi endomorphism is given by formula
\eqref{isotropic}. We make the assumption that $S$ is non-flat and
therefore the semi-basic $1$-form $\alpha$ and the Ricci scalar $\rho$ are nowhere vanishing on $T_0M$. If we allow to work with conic pseudo-Finsler
functions, then we will restrict the domain to some open cone $A\subset
T_0M$, where $\alpha$ and $\rho$ are not vanishing.

Consider a vector field $H\in {\mathfrak X}(T_0M)$ that satisfies the
following conditions:
\begin{align}
[{\mathcal C}, H]=H, \quad h(H)=H, \quad \alpha(H)=0. \label{Berwald_s1} 
\end{align}  
First two conditions \eqref{Berwald_s1} express that $H$ is a $2^{+}$-homogeneous
horizontal vector field. Last condition above is equivalent to
$\Phi(H)=\rho JH$ and means that $H$ is (fibrewise) an eigenvector for the Jacobi
endomorphism $\Phi$ corresponding to the non-vanishing eigenvalue $\rho$. 

Conditions \eqref{Berwald_s1} do not determine uniquely the vector
field $H$, such vector field is determined only up to a $0^{+}$-homogeneous 
function factor. We can fix such a vector field $H$ by requiring that $\{S, H\}$ is
compatible with a fixed orientation of $M$. Since
$\alpha(S)=\rho\neq 0$ and $\alpha(H)=0$ we obtain that $H$ and $S$ are
two linearly independent vector fields that generate the horizontal
distribution. It follows that $V:=JH$ and ${\mathcal C}=JS$ are two
linearly independent vector fields that generate the vertical
distribution. Consequently, $(H, S, V, {\mathcal C})$ is a frame on
$T_0M$, which is called the \emph{Berwald frame}. 

\begin{lem} \label{Bcom_spray}
Consider $S$ a spray and let $(H, S, V, {\mathcal C} )$ be a fixed Berwald
  frame.
\begin{itemize}
\item[i)] The following formulae are satisfied:
\begin{align}
[{\mathcal C}, V]=0, \quad [S, H]=\nabla H + \rho V, \quad [S,V] = -H +
  \nabla V. \label{Berwald_s2} 
\end{align} 
\item[ii)] The rank of the $2$-form $\Omega$ defined by 
  \eqref{omegas} is given by 
\begin{align}
\operatorname{rank}(\Omega) =
 \left\{ 
        \begin{array}{cc}
4, & \quad \textrm{if} \ \alpha\left([H,V]\right) \neq 0; \\
2, & \quad \textrm{if} \ \alpha\left([H,V]\right) = 0.
        \end{array} 
\right. \label{ranko}
\end{align}
\end{itemize}
\end{lem}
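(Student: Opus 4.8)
For part i), the plan is to compute the three brackets by exploiting the defining properties of the Berwald frame together with the Fr\"olicher--Nijenhuis identities recalled in Section \ref{Sprays}. First I would establish $[\mathcal{C},V]=0$: since $V=JH$, one has $[\mathcal{C},JH]=[\mathcal{C},J]H+J[\mathcal{C},H]$; using that $\mathcal{L}_{\mathcal C}J=-J$ (the $(-1)$-homogeneity of the tangent structure) and $[\mathcal{C},H]=H$ from \eqref{Berwald_s1}, the two terms cancel. For the bracket $[S,H]$, I would decompose it into horizontal and vertical parts via the projectors \eqref{hv}. The horizontal part is $h[S,hH]=h[S,H]=\nabla H$ by the definition \eqref{nabla} of the dynamical covariant derivative (recall $hH=H$). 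The vertical part is $v[S,H]=v\circ[S,h](H)=\Phi(H)$, which equals $\rho JH=\rho V$ precisely because $\alpha(H)=0$ forces $\Phi(H)=\rho JH$, as noted after \eqref{Berwald_s1}. Adding these gives $[S,H]=\nabla H+\rho V$. For $[S,V]=[S,JH]$, I would write $[S,JH]=[S,J]H+J[S,H]$; the term $[S,J]H$ contributes $-hH=-H$ (since $[S,J]=-\mathbb{F}$ modulo the horizontal projection, or more directly since $h\circ[S,J]=-h$ acting appropriately — I would instead use the cleaner route $\nabla V=h[S,hV]+v[S,vV]=v[S,V]$ together with the known identity $\nabla\circ J=J\circ\nabla$ and $\nabla H$ being horizontal), and $J[S,H]=J(\nabla H+\rho V)=\nabla V + \rho JV=\nabla V$ since $JV=J^2H=0$. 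Collecting terms yields $[S,V]=-H+\nabla V$. The mild subtlety here is bookkeeping of the homogeneity-weighted identities for $J$ and $\Gamma$ along $S$, but these are all standard and recalled in the excerpt.

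For part ii), the strategy is to reduce the rank computation of $\Omega$ to the single scalar $\alpha([H,V])$ by expanding $\Omega$ in the coframe dual to $(H,S,V,\mathcal{C})$. Write $\beta:=\alpha/\rho$; since $\alpha$ is semibasic and $1^+$-homogeneous while $\rho=i_S\alpha$ is $2^+$-homogeneous, $\beta$ is a $0^+$-homogeneous semibasic $1$-form with $\beta(S)=1$, $\beta(H)=0$, and $\beta$ annihilates the whole vertical distribution (semibasic). Thus in the dual coframe $\beta$ is, up to the horizontal part, essentially ``$dx$-type.'' I would compute $d\beta$ using the Cartan formula $d\beta(X,Y)=X\beta(Y)-Y\beta(X)-\beta([X,Y])$ on all pairs from the frame: most pairs vanish by homogeneity and the semibasic character, and the surviving information is controlled by $\beta([H,V])=\alpha([H,V])/\rho$ and by $\beta([S,\cdot])$ terms which can be rewritten using part i). Then I would handle the correction term $2\,i_{\mathbb F}\beta\wedge\beta$: since $\mathbb F$ interchanges horizontal and vertical (swapping $H\leftrightarrow -V$ and $S\leftrightarrow -\mathcal C$ up to the connection, per \eqref{acs}), $i_{\mathbb F}\beta$ is the ``vertical partner'' of $\beta$, and $i_{\mathbb F}\beta\wedge\beta$ is a decomposable $2$-form pairing a horizontal and a vertical covector.

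The key step — and the main obstacle — is to show that after assembling $d\beta$ and $2\,i_{\mathbb F}\beta\wedge\beta$, the resulting $2$-form $\Omega$ has, in the Berwald coframe, a matrix whose rank is governed entirely by whether the coefficient $\alpha([H,V])$ vanishes. Concretely, I expect $\Omega$ to take the shape $\Omega = \mu\, (\text{horiz}\wedge\text{vert partner}) + (\text{terms always of rank }\le 2)$, where the ``always rank $\le 2$'' part is a single decomposable block coming from the $\mathbb F$-term and the tautological pairing $dx\wedge\delta y$, and $\mu$ is a nonzero multiple of $\alpha([H,V])$. When $\mu\ne 0$ the two blocks are transverse and $\Omega$ is symplectic (rank $4$); when $\mu=0$ only the rank-$2$ block survives. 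Verifying that the ``other'' pairings genuinely collapse — i.e.\ that no second independent $2$-form sneaks in from the $d(\alpha/\rho)$ term when $\alpha([H,V])=0$ — is where the homogeneity relations ($i_S\alpha=\rho$, $2^+$-homogeneity of $\rho$, $0^+$-homogeneity of $\beta$) and the bracket relations \eqref{Berwald_s2} must be used in concert; I would carry this out by evaluating $\Omega$ on the six frame pairs one at a time and checking that exactly $\Omega(H,V)$ (and its $\mathbb F$-image partner) carry the factor $\alpha([H,V])$ while the remaining entries are forced by homogeneity to vanish or to lie in the fixed rank-$2$ block.
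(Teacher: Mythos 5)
Your proposal is correct and follows essentially the same route as the paper: part i) via the homogeneity identity $[{\mathcal C},J]=-J$ and the horizontal/vertical splitting $[S,H]=h[S,H]+v[S,H]=\nabla H+\Phi(H)$ together with $\nabla J=0$, and part ii) by writing out the matrix of $\Omega$ in the Berwald coframe using $d(\alpha/\rho)(X,Y)=-(\alpha/\rho)([X,Y])$ on frame pairs. The paper simply records the resulting matrix \eqref{omegab} explicitly, from which the rank dichotomy in \eqref{ranko} is immediate.
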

\begin{proof}
i) The tangent structure $J$ is $0^{+}$-homogeneous, which means that
$[{\mathcal C}, J]=-J$. If we evaluate both sides of this formula on the
horizontal $2^{+}$-homogeneous vector field $H$ we obtain 
\begin{eqnarray*}
-V=-J(H)=[{\mathcal C}, J](H)=[{\mathcal C}, JH] - J[{\mathcal C}, H] =
  [{\mathcal C}, V] - V. \end{eqnarray*}
Thus formula \eqref{Berwald_s2} is true, which means
that $V$ is a $1^{+}$-homogeneous vector field.

Using formulae  \eqref{nabla} and \eqref{isotropic}, we have 
\begin{eqnarray*}
[S, H]=h[S, hH] + v[S, hH]= \nabla H + \Phi(H) = \nabla H + \rho V.
\end{eqnarray*}

From the properties of the dynamical covariant
derivative, \cite[Theorem 3.4]{BD09}, we have that $\nabla J=0$, and hence $J(\nabla
H)=\nabla J(H) = \nabla V$. From the second formula in \eqref{Berwald_s2} we obtain $J[S,H]
= \nabla V$. In order to prove the last formula in \eqref{Berwald_s2}, we use that
$H$ is horizontal and the definition \eqref{hv} of the horizontal
projector $h$.  We obtain
\begin{eqnarray*} 2H=\operatorname{Id}(H)+[J,S](H)=H + [J(H), S] -
  J[H, S]= H + [V,S] + \nabla V, \end{eqnarray*} which was to be shown. 

ii) We give a matrix representation of the $2$-form $\Omega$
with respect to the Berwald frame $(H, S, V, {\mathcal C})$. We
evaluate \eqref{omegas} on pairs of vector fields $X, Y \in \{H, S, V,
{\mathcal C}\}$. Since 
\begin{align*}
\frac{\alpha}{\rho}(S)=1, \ \frac{\alpha}{\rho}(X)=0, \forall X \in
  \{H, V, {\mathcal C}\}, 
\end{align*}
it follows that 
\begin{align*}
  d\left(\frac{\alpha}{\rho}\right)(X,Y) = - 
  \frac{\alpha}{\rho}\left([X, Y]\right),  \forall X,Y \in
  \{H, S, V, {\mathcal C}\}. 
\end{align*}
Using the commutation formulae \eqref{Berwald_s2}, we obtain the
following matrix representation of  $\Omega$ with respect
to $(H, S, V, {\mathcal C})$
\begin{align}
\Omega = \left(
\begin{array}{cccc}
0 & \alpha(\nabla H)/\rho &  -\alpha([H,V])/\rho & 0 \\
-\alpha(\nabla H)/\rho & 0 & 0 &1 \\
 \alpha([H,V])/\rho & 0 & 0 &0 \\
0 & -1 & 0 &0
\end{array} \label{omegab}
\right).\end{align}
From \eqref{omegab} we see that the rank
of $\Omega$ is given by formula \eqref{ranko}.
\end{proof}

\begin{defn} \label{regulars}
A spray $S$ is called \emph{regular} if the $2$-form $\Omega$ is an
almost symplectic form, which means that it satisfies the condition: $\operatorname{rank}(\Omega)=4$. The spray is called
\emph{degenerate} if $\operatorname{rank}(\Omega)=2$. 
\end{defn}

If we replace $H$ by $aH$ where
$a\in C^{\infty}(T_0M)$ is a nowhere vanishing $0^{+}$-homogeneous function, then
we have that $\alpha([aH, aV])=a^2\alpha([H,V])$ and hence the
regularity condition $\alpha([H,V])\neq 0$ does not depend on the
choice of the Berwald frame.

\subsection{Berwald frame for a Finsler function.} \label{BfF}

Consider the geodesic spray $S$ of  a Finsler function $F$ and consider the
Berwald frame $(H, S, V, {\mathcal C})$, determined the conditions
\eqref{Berwald_s1}, with the extra
condition that $H$ has the same length as $S$, which means that 
\begin{align}
G(H,H)=G(S,S)=F^2. \label{Berwald_f1}
\end{align}
The three normalised vector field $(H/F, S/F, V/F)$ were used in
\cite[\S 4.3]{BCS00} to provide an orthonormal frame for the
projective sphere bundle $SM$ of a $2$-dimensional Finsler metric.

Now, for the Berwald frame, uniquely associated to a Finsler function,
we will provide all the commutation formulae, extending the results of Lemma
\ref{Bcom_spray}.
\begin{lem}
\label{Bcom_finsler}
Consider $S$ the geodesic spray of a Finsler function $F$ and let
$( H, S, V, {\mathcal C})$ be its Berwald frame. 
\begin{itemize}
\item[i)] The Berwald frame satisfies the formulae
\begin{align}
\nonumber &  H(F^2)=0,\quad  V(F^2)=0, \quad \nabla H=0, \quad \nabla V=0, \\
& [S, H]= \rho V, \quad [S,V] = -H, \quad [H,V]=S+IH +S(I) V. 
\label{Berwald_f2} 
\end{align} 
\item[ii)] The geodesic spray is regular.
\end{itemize}
\end{lem}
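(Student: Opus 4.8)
The plan is to derive the commutation formulae in \eqref{Berwald_f2} by combining the Finsler-specific normalisation \eqref{Berwald_f1} with the general spray formulae \eqref{Berwald_s2} and with Proposition \ref{prop1}, which tells us that $S$ has scalar flag curvature. First I would establish the scalar identities on $F^2$. From the Euler--Lagrange equation we already know $d_hF^2=0$, hence $S(F^2)=0$ and $H(F^2)=0$ since both $S$ and $H$ are horizontal; then applying $J$ (or using that $V=JH$ is vertical together with $dF^2=d_vF^2$ and formula \eqref{ar}) I would get $V(F^2)=0$. Next, the vanishing of the dynamical covariant derivatives: by definition $\nabla F^2 = S(F^2)=0$, and since $\nabla$ is a tensor derivation commuting with $J$ and with $h,v$, the metric $G$ (built from $F^2$ via \eqref{gomega}) is $\nabla$-parallel; because $\nabla H$ is again a $2^+$-homogeneous horizontal vector field and $\nabla(\alpha/\rho)=0$ (this last from $\nabla d_J = d_J\nabla$ applied to $F^2$, using \eqref{ar}), the vector $\nabla H$ satisfies the same three defining conditions \eqref{Berwald_s1} as $H$, so $\nabla H = cH$ for a $0^+$-homogeneous function $c$; then $0 = \nabla G(H,H) = 2cG(H,H) = 2cF^2$ forces $c=0$, i.e. $\nabla H=0$, and applying $J$ gives $\nabla V = J\nabla H = 0$.

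With $\nabla H=0$ and $\nabla V=0$ in hand, the first two bracket formulae $[S,H]=\rho V$ and $[S,V]=-H$ follow immediately by substituting into \eqref{Berwald_s2}. The remaining and most substantial formula is $[H,V]=S+IH+S(I)V$. Here I would argue as follows: write $[H,V]=aH+bS+cV+e\,\mathcal C$ in the Berwald frame, with $a,b,c,e$ functions on $T_0M$. Taking $\alpha$ of both sides and using $\alpha(H)=\alpha(V)=0$, $\alpha(\mathcal C)=\rho$, $\alpha(S)=\rho$ gives $\alpha([H,V])=\rho(b+e)$; but by part ii) of Lemma \ref{Bcom_spray} regularity is exactly $\alpha([H,V])\neq0$, so I must first prove ii) — that the geodesic spray of a Finsler function is regular — and this I would do via \eqref{oomega}: $\Omega=-\tfrac{1}{2F^2}\omega_{F^2}$ and $\omega_{F^2}$ is symplectic by Definition \ref{Finsler_def}(iii), hence $\operatorname{rank}(\Omega)=4$, hence $\alpha([H,V])\neq0$. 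To pin down the coefficients individually I would use the homogeneity bookkeeping ($H,S$ are $2^+$-homogeneous, $V,\mathcal C$ are $1^+$-homogeneous, so $[H,V]$ is $2^+$-homogeneous, forcing $a,b$ to be $0^+$-homogeneous and $c,e$ to be $1^+$-homogeneous functions — actually $c,e$ homogeneous of degree $1$), and the Jacobi identity together with the already-known brackets $[S,H],[S,V],[\mathcal C,\cdot]$. Applying $J$ to $[H,V]$ and using $J[H,V]$ computed from $JH=V$, $JV=0$ via $[\mathcal C,J]=-J$-type identities, or more directly contracting with $d_JF^2$ and with $G$, should yield $e=0$ and $b=1$; evaluating $G([H,V],H)$ and $G([H,V],V)$ using $G(H,H)=G(S,S)=F^2$, $G(H,S)=0$, $G(V,\cdot)$ data, together with $H(F^2)=V(F^2)=0$, should give $c$ and the relation defining $a=:I$ (the main scalar), and then the $S$-derivative of the normalisation or the Jacobi identity $[S,[H,V]]=[[S,H],V]+[H,[S,V]]$ forces $c=S(I)$.

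The main obstacle is the last bracket formula, specifically disentangling the four coefficients and identifying $I$ as a genuine invariant (the main scalar) with $c=S(I)$; this is where one must be careful to use the metric compatibility $\nabla G=0$, the normalisation \eqref{Berwald_f1}, and the Jacobi identity in the right order rather than just homogeneity. One clean route: reduce everything to the projective sphere bundle $SM$ using the orthonormal frame $(H/F,S/F,V/F)$ of \cite[\S 4.3]{BCS00}, where the structure equations of a $2$-dimensional Finsler manifold are classical and $[H,V]=S+IH+S(I)V$ is essentially the standard Bianchi/structure identity; but I would prefer to keep the computation intrinsic on $T_0M$, feeding in $S(F^2)=0$, $H(F^2)=0$, $V(F^2)=0$, $\nabla H=\nabla V=0$, and the scalar-flag-curvature form of $\Phi$ from Proposition \ref{prop1}, so that part ii) drops out along the way from \eqref{oomega}. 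Once the brackets are established, part ii) is immediate as noted, so I would in fact prove ii) first (it is short) and then use regularity freely while computing the $[H,V]$ coefficients.
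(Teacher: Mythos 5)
Your proposal follows essentially the same route as the paper: prove ii) first from the relation $\Omega=-\tfrac{1}{2F^2}\omega_{F^2}$ of \eqref{oomega}; get $H(F^2)=0$ from $d_hF^2=0$ and $V(F^2)=0$ from $\alpha(H)=0$ via \eqref{ar}; kill $\nabla H$ and $\nabla V$ by a parallelism argument; read the first two brackets off \eqref{Berwald_s2}; and finish $[H,V]$ with the regularity of $\Omega$ plus the Jacobi identity. Three remarks. First, there is a slip: $\alpha$ is semi-basic, so $\alpha({\mathcal C})=0$, not $\rho$; your formula $\alpha([H,V])=\rho(b+e)$ should read $\alpha([H,V])=\rho b$, and then $b=-\Omega(H,V)=1$ follows at once from \eqref{omegahv1} -- this is exactly how the paper fixes the $S$-coefficient, and it makes the extra contractions with $d_JF^2$ and $G$ that you sketch unnecessary: the Jacobi identity alone then yields $e=0$ and $c=S(a)$, and $I$ is simply the name given to the $H$-coefficient $a$ (no metric computation is needed to ``identify'' it within this lemma; the interpretation as the main scalar is a remark made afterwards). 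Second, the paper obtains $\nabla H=\nabla V=0$ from $\nabla\Omega=0$ together with the values $\Omega(H,{\mathcal C})=0$ and $\Omega(H,V)=-1$, rather than from $\nabla G=0$ and the normalisation \eqref{Berwald_f1}; the two are equivalent through \eqref{gomega}, so your variant is fine. Third, your intermediate claim $\nabla(\alpha/\rho)=0$ is correct, but not because $\nabla$ and $d_J$ commute in general (their commutator on functions is a multiple of $d_h$); it holds here because $\nabla F^2=S(F^2)=0$ and $\nabla d_JF^2=0$, the latter being a reformulation of the Euler--Lagrange equation $d_hF^2=0$ for the geodesic spray.
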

\begin{proof}
First we prove the second part ii) of the lemma.
Using formulae \eqref{oomega}, \eqref{gomega} and the scaling condition
\eqref{Berwald_f1}  we obtain
\begin{align}
\Omega(H,V)=\frac{-1}{2F^2}\omega_{F^2}(H,V) =
  \frac{-1}{2F^2}\omega_{F^2}(H, -{\mathbb F} H) = \frac{-1}{2F^2} 2
  G(H,H) =-1. \label{omegahv1}
\end{align}
From the matrix representation \eqref{omegab} it is clear that
$\operatorname{rank}(\Omega)=4$, so the geodesic spray
$S$ is regular.

i) Equation \eqref{isomega} is equivalent to $d_hF^2=0$, which is
further equivalent to $hX(F^2)=0$, for all $X \in {\mathfrak
  X}(T_0M)$. It follows that the horizontal vector field $H$ also satisfies
$H(F^2)=0$.

The geodesic spray $S$ has scalar flag curvature, which
means that its Jacobi endomorphism is given by formula \eqref{sfc}. It follows
that the last condition \eqref{Berwald_s1}, for defining the horizontal
vector field $H$, can be written as follows
\begin{align}
0=2\alpha(H)=2\kappa Fd_JF(H)=\kappa d_JF^2(H)= \kappa JH(F^2) =
  \kappa V(F^2), \end{align} which means that $V(F^2)=0$, since
$\kappa \neq 0$. 

From the matrix representation \eqref{omegab} we obtain $\Omega(H,
{\mathcal C})=0$ and $\Omega(S, {\mathcal C})=1$.

Since the dynamical covariant derivative preserves the horizontal
distribution, it follows that $\nabla H=a_1S+a_2H$, for $a_1$, $a_2\in
C^{\infty}(T_0M)$, is a horizontal
vector field. It follows that $\nabla V =\nabla JH = J\nabla H =
a_1{\mathcal C} + a_2 V$.  Using the properties of the dynamical
covariant derivative that were proven in \cite{BD09} we have that $\nabla \Omega=0$ and $\nabla {\mathcal C}=0$,
and therefore
\begin{align*}
0 & =\left(\nabla \Omega\right)(H, {\mathcal C}) = \nabla \left(\Omega(H,
  {\mathcal C})\right) - \Omega(\nabla H,  {\mathcal C}) - \Omega(H,
  \nabla {\mathcal C}) = -\Omega
  (a_1S + a_2H, {\mathcal C}) = -a_1, \\
 0 & =\left(\nabla \Omega\right)(H, V) = \nabla \left(\Omega(H,
 V) \right) - \Omega(\nabla H, V) - \Omega(H,
  \nabla V) = -2 a_2 \Omega (H,V) = -2a_2
\end{align*}
and hence $\nabla H = \nabla V
=0$. Using these calculations and the last two commutation formulae
in \eqref{Berwald_s2}, it follows that the first two commutation
formulae in \eqref{Berwald_f2} are true.

Using the notation $[H,V]=b_1S+b_2H+b_3{\mathcal C}+b_4V$, for $b_1,
b_2$, $b_3, b_4\in C^{\infty}(T_0M)$, and the
regularity condition \eqref{omegahv1}, we have that $b_1=\alpha([H,V])/\rho=-\Omega(H,V)=1$.

From the first two commutation formulae
\eqref{Berwald_f2} and the Jacobi identity 
$[S,[H,V]]+[V,[S,H]]+[H,[V,S]]=0$,  we obtain  
\begin{align*}
0 & =[S, S+b_2H+b_3{\mathcal C}+b_4V] + [V, \rho V] + [H,H] \\ & =-b_3S +
  \left(S(b_2)-b_4\right) H +S(b_3){\mathcal C} + \left(S(b_4) +
  b_2\rho+ V(\rho)\right) V.
\end{align*}
Preceding calculations imply $b_3=0$, $b_4=S(b_2)$ and $S(b_4) +
b_2\rho + V(\rho)=0$. Using the notation $I=b_2$ we obtain that the
last commutation formula \eqref{Berwald_f2} is true and the
coefficient function $I$ satisfies $S^2(I)+I\rho + V(\rho) =0.$ 
\end{proof}

The three commutation formulae \eqref{Berwald_f2}, viewed as derivations on
$0^{+}$-homogeneous functions, were obtained first by Berwald
\cite[(7.6)]{Berwald41}. By comparing the last formula in \eqref{Berwald_f2}
and the first formula in \cite[(7.6)]{Berwald41} we obtain that the function
$I$ is the main scalar of the Finsler function.  
In the Riemannian case, the main scalar $I$ vanishes and the three
commutation formulae \eqref{Berwald_f2} reduce to the commutation
formulae \cite[Lemma 1]{Crampin14}. For a different derivation of the
Berwald frame with the commutation formulae \eqref{Berwald_f2} we
refer to \cite{VV01}, see also \cite[\S 9.9.1]{SLK14}, where the pull-back
formalism is applied.

\subsection{Berwald frame for a degenerate Finsler function.} \label{BfdF}

Consider a geodesic spray $S$ of a degenerate Finsler function
$F$; then $S$ is a solution of the equation
\eqref{isomega}. For the spray $S$, again, we consider a Berwald frame
determined by the conditions \eqref{Berwald_s1}.  First we extend the results of Lemma
\ref{Bcom_spray}.
\begin{lem}
\label{Bcom_degfinsler}
Consider a geodesic spray $S$ of a degenerate Finsler function $F$ and let
$(H, S, V, {\mathcal C}) $ be a Berwald frame. 
\begin{itemize}
\item[i)] The following formulae are valid:
\begin{align}
& H(F^2)=0, \quad V(F^2)=0, \quad \alpha \left(\nabla H \right)=0, \quad
  d_vF\left(\nabla V\right) =0, \nonumber \\
& \alpha\left([S,H]\right) = d_vF\left([S,H]\right) =0, \quad
  \alpha\left([S,V]\right) = d_vF\left([S,V]\right) =0, \label{Berwald_d1} \\
&  \alpha\left([H, V]\right) = d_vF\left([H, V]\right) =0. \nonumber
\end{align}
\item[ii)] The geodesic spray is degenerate.
\end{itemize}
\end{lem}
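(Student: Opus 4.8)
The plan is to mimic the structure of the proof of Lemma \ref{Bcom_finsler}, but now using the weak regularity condition $\operatorname{rank}(g_{ij})=1$ in place of maximal rank, which forces the Hilbert $2$-form $\omega_{F^2}$ to have rank $2$ instead of $4$. By Proposition \ref{prop1}, $S$ has scalar flag curvature, so the Jacobi endomorphism is given by \eqref{sfc} and $\alpha/\rho = d_JF/F = d_JF^2/(2F^2)$ by \eqref{ar}. The first two identities $H(F^2)=0$ and $V(F^2)=0$ are obtained exactly as in Lemma \ref{Bcom_finsler}~i): the Euler-Lagrange equation \eqref{isomega} is equivalent to $d_hF^2=0$, hence $H(F^2)=0$ since $H$ is horizontal, and then $0=2\alpha(H)=\kappa V(F^2)$ gives $V(F^2)=0$ since $\kappa\neq 0$.

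Next I would prove part ii), namely that the spray is degenerate, by computing $\alpha([H,V])$. By \eqref{oomega}, which holds here too since its derivation only used \eqref{ar} (valid for degenerate Finsler functions as well), we have $\Omega = -\frac{1}{2F^2}\omega_{F^2}$, so $\operatorname{rank}(\Omega)=\operatorname{rank}(\omega_{F^2})=2$. By the matrix representation \eqref{omegab} from Lemma \ref{Bcom_spray}~ii), rank $2$ is equivalent to $\alpha([H,V])=0$; equivalently one reads off \eqref{ranko} directly. This proves ii) and simultaneously establishes the identity $\alpha([H,V])=0$ appearing in \eqref{Berwald_d1}. To get the companion identity $d_vF([H,V])=0$, I would note that $d_vF = d_vF^2/(2F) = \frac{1}{2F}i_{{\mathbb F}}d_JF^2$ restricted appropriately; more directly, since $H(F^2)=V(F^2)=0$ and $[H,V]$ is some combination $b_1S+b_2H+b_3{\mathcal C}+b_4V$ in the Berwald frame, applying the Jacobi identity $[S,[H,V]]+[V,[S,H]]+[H,[V,S]]=0$ together with the commutation relations \eqref{Berwald_s2} pins down the coefficients. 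In particular $b_1 = \alpha([H,V])/\rho = 0$, so $[H,V] = b_2 H + b_3{\mathcal C} + b_4 V$; since $\nabla$ preserves the horizontal distribution and $d_vF^2$ annihilates horizontal vectors, $d_vF^2(\nabla V)=0$ follows from $\nabla V = J\nabla H$ together with $\nabla H$ horizontal, giving $d_vF\left(\nabla V\right)=0$. The identities involving $\alpha(\nabla H)$ and $\alpha([S,H])$, $\alpha([S,V])$ come from $\alpha = \rho\, d_JF/F$ and the fact that $d_JF^2$ vanishes on horizontal vectors: since $[S,H]=\nabla H + \rho V$ by \eqref{Berwald_s2}, we get $d_JF^2([S,H]) = \rho\, d_JF^2(V) = \rho V(F^2)\cdot(\text{factor}) = 0$, hence $\alpha([S,H])=0$, and $d_vF([S,H])=0$ follows analogously; the case of $[S,V]=-H+\nabla V$ is handled the same way using $H(F^2)=0$ and that $\nabla H$ is horizontal.

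The main obstacle will be the bookkeeping around the degenerate metric tensor: in the regular case one uses the semi-Riemannian metric $G$ of \eqref{gomega} and the normalization \eqref{Berwald_f1} to get clean statements like $\Omega(H,V)=-1$, but here $G$ is only a \emph{semi}-Riemannian (degenerate) metric, so the normalization $G(H,H)=F^2$ may not be available or well-defined and one cannot simply invert $g_{ij}$. The workaround is precisely to phrase everything in terms of the two semi-basic $1$-forms $\alpha$ (equivalently $d_JF^2$) and $d_vF^2 = i_{{\mathbb F}}d_JF^2$, which are the natural objects surviving in the degenerate setting, and to exploit that both annihilate the horizontal distribution while $d_JF^2$ also annihilates ${\mathbb F}H = -$ (the relevant vertical direction). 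One has to be careful that $d_vF$ is only defined away from the zero set of $F$, but since we work on the cone where $F>0$ this is not an issue. With these substitutions in place, each of the seven identities in \eqref{Berwald_d1} reduces to one of: $d_hF^2=0$, $V(F^2)=0$, $\nabla$ preserves horizontality, or $\operatorname{rank}(\omega_{F^2})=2$, all of which are available.
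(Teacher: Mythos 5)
Your overall strategy (invoke the scalar flag curvature relation \eqref{ar}, the identity $\Omega=-\tfrac{1}{2F^2}\omega_{F^2}$, the matrix representation \eqref{omegab}, and the Jacobi identity) is the same as the paper's, and the parts you actually carry out — $H(F^2)=0$, $V(F^2)=0$, part ii), and $\alpha([H,V])=0$ — are correct. However, there is a genuine gap at the identity $\alpha(\nabla H)=0$, and it propagates to $d_vF(\nabla V)=0$, $\alpha([S,H])=0$, $d_vF([S,V])=0$ and $d_vF([H,V])=0$. You justify $\alpha(\nabla H)=0$ by ``the fact that $d_JF^2$ vanishes on horizontal vectors'', which is false: $d_JF^2$ is a \emph{semi-basic} $1$-form, so it vanishes on \emph{vertical} vectors, and on horizontal ones it is generally nonzero, e.g.\ $d_JF^2(S)={\mathcal C}(F^2)=2F^2\neq 0$. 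Writing $\nabla H=a_1S+a_2H$ (the dynamical covariant derivative preserves the horizontal distribution), one has $\alpha(\nabla H)=a_1\rho$, so the real content is $a_1=0$, and nothing you invoke delivers it. The correct route is via $\Omega(H,S)=0$: since $\Omega=-\tfrac{1}{2F^2}\omega_{F^2}$ and $\omega_{F^2}=2g_{ij}\,dx^i\wedge\delta y^j$ vanishes on pairs of horizontal vectors (equivalently, $\omega_{F^2}(S,H)=dF^2(H)=H(F^2)=0$ by the Euler--Lagrange equation), while $\Omega(H,S)=\alpha(\nabla H)/\rho$ by \eqref{omegab}.

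The downstream steps then fail as written. Since $\nabla V=J\nabla H=a_1{\mathcal C}+a_2V$ is \emph{vertical}, your appeal to ``$d_vF^2$ annihilates horizontal vectors'' is a non sequitur: $d_vF(\nabla V)=a_1{\mathcal C}(F)+a_2V(F)=a_1F$, so again you need $a_1=0$ (note $d_vF$ does not annihilate vertical vectors, as $d_vF({\mathcal C})=F\neq 0$). Likewise $\alpha([S,H])=\alpha(\nabla H)+\rho\,\alpha(V)=\alpha(\nabla H)$ because $\alpha$ is semi-basic, so this identity is not a consequence of $V(F^2)=0$ but again of $a_1=0$. Finally, for $d_vF([H,V])=0$ you must show that the ${\mathcal C}$-component $b_3$ of $[H,V]=b_2H+b_3{\mathcal C}+b_4V$ vanishes; you assert the Jacobi identity ``pins down the coefficients'' but never extract $b_3=0$, and in fact the $S$-component of the Jacobi identity yields $b_3=b_2a_1+V(a_1)$, which collapses to $0$ only after $a_1=0$ is known. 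So the whole lemma hinges on first establishing $\alpha(\nabla H)=0$ from the structure of $\omega_{F^2}$, and that step is missing from your argument.
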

\begin{proof}
i) We obtain as above that
$d_hF^2=0$ and hence $H(F^2)=0$. The condition $\alpha(H)=0$ implies $V(F^2)=0$.

Since $F$ is a degenerate Finsler function,
$\operatorname{rank}(\omega_{F^2})=\operatorname{rank}(\Omega)=2$. Using
the matrix representation \eqref{omegab} of the $2$-form $\Omega$ with 
respect to the Berwald frame, we find that
$\Omega(H,S)=0$ and $\Omega(H,V)=0$. So, we have 
\begin{align*}
\alpha\left([H,V]\right) =-\rho \Omega(H,V)= 0, \quad
  \alpha\left(\nabla H\right) = \rho \Omega(H,
  S)=  0. \end{align*} 
Therefore, $\alpha(\nabla H)=0$ and hence $d_JF(\nabla
H)=0$. The dynamical covariant derivative preserves the horizontal
distribution, which is spanned by $H$ and $S$. The condition
$\alpha(\nabla H)=0$ means that the vector field $\nabla H$ does not
have a component along $S$ and hence $\nabla H=a_1 H$, for some
function $a_1\in C^{\infty}(T_0M)$. Using the fact that $\nabla J=0$,
it follows that $\nabla V =a_1 V$, which shows that the vertical
vector field $\nabla V$ does not have a component along ${\mathbb
  C}$. The last commutation formulae \eqref{Berwald_s2} can be written
now as follows:  
\begin{align}
[S,H]= a_1H+\rho V, \quad [S,V]=-H + a_1V.
\label{Berwald_d2}  \end{align} 
Commutation formulae \eqref{Berwald_d2} show that the vector fields
$[S,H]$ and $[S,V]$ have no components along $S$ and ${\mathcal C}$ and
therefore the corresponding formulae \eqref{Berwald_d1} are true.

We have seen already that the degeneracy condition on a Finsler function implies
$\alpha([H,V]) = 0$, which means that the vector field $[H,V]$ has no
component along $S$ and hence we can write it in the form 
$ [H, V] = a_2H + a_3 {\mathcal C} + a_4 V,$ for some functions  $a_2, a_3, a_4 \in
  C^{\infty}(T_0M).$  Using the Jacobi identity for the three vector fields $S, H, V$ and
the above expressions for the Lie brackets $[S, H]$, $[S, V]$ and
$[H,V]$ we obtain $a_3=0$ and therefore the last formulae
\eqref{Berwald_d1} are true.

ii) Since $F$ is a degenerate Finsler function, $\operatorname{rank}(g_{ij})=1$, which, in view of
\eqref{ogij} and \eqref{oomega}, is equivalent to
$\operatorname{rank}(\omega_{F^2})=\operatorname{rank}(\Omega)=2$,
thus, the spray $S$ is degenerate.  
\end{proof}

\section{Integrability of the Berwald distribution and Finsler metrizability.}
\label{Frobenius}

In this section we will prove that a $2$-dimensional spray $S$ is metrizable if and
only if the Berwald distribution \eqref{dhphi} is integrable. The
regularity of the corresponding Finsler function depends on the rank of
the $2$-form \eqref{omegas}. We treat separately
the regular and degenerate cases.

\subsection{Finsler metrizability.} \label{Fm}

For a $2$-dimensional, non-flat spray $S$, we consider the
Berwald distribution ${\mathcal D}$ \eqref{dhphi} and the $2$-form $\Omega$
\eqref{omegas}. The next theorem provides characterisations for the Finsler
metrizability of a regular spray, together with an algorithm that can
be used to construct effectively a Finsler function that metrizes the spray.  

\begin{thm} \label{thm1}
We consider a $2$-dimensional, non-flat spray $S$. The
following conditions are equivalent:
\begin{itemize}
\item[i)] $S$ is Finsler metrizable.
\item[ii)] $S$ is regular and the Berwald distribution ${\mathcal D}$ is integrable.
\item[iii)] There exists a closed $1$-form $\omega \in {\mathcal
    D}^{\ast}$ such that 
\begin{align} \operatorname{rank}\left( d_J\omega + 2
  \omega \wedge i_J\omega\right) =4. \label{rank4}\end{align} 
\end{itemize} 
\end{thm}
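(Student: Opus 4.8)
The plan is to prove the cycle of implications i) $\Rightarrow$ iii) $\Rightarrow$ ii) $\Rightarrow$ i), using the Berwald frame $(H,S,V,\mathcal{C})$ as the central computational tool throughout.

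\emph{Step 1: i) $\Rightarrow$ iii).} Suppose $S$ is the geodesic spray of a Finsler function $F$. By Proposition~\ref{prop1} the spray has scalar flag curvature, and by formula \eqref{ar} we have $\alpha/\rho = d_JF/F = d_JF^2/(2F^2)$. I would take $\omega := \alpha/\rho$. First I need to check that $\omega$ annihilates the Berwald distribution $\mathcal{D} = \operatorname{Im}(h)\oplus\operatorname{Im}(\Phi)$: indeed $\alpha$ is semi-basic so it kills nothing obvious, but $\alpha/\rho$ evaluated on horizontal vectors — here I should instead realize that $\mathcal{D}^{\ast}$ means the \emph{annihilator} of $\mathcal{D}$, and since $\mathcal{D}$ is $3$-dimensional, $\mathcal{D}^{\ast}$ is a line; from the Berwald frame description, $\mathcal{D}$ is spanned by $\{H,S,V\}$ (horizontal part plus $\operatorname{Im}\Phi$ which is spanned by $JH=V$ because $\Phi=\rho J-\alpha\otimes\mathcal{C}$ kills $H$ and sends the complementary horizontal direction to a multiple of $\mathcal{C}$ minus... ) — I must check carefully that $\mathcal{D} = \operatorname{span}\{H,S,V\}$, equivalently that $\mathcal{D}$ is the kernel of $d_JF$, equivalently $\mathcal{D}^{\ast}$ is spanned by the semi-basic form $\alpha$ (up to scale). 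Then $\omega = \alpha/\rho \in \mathcal{D}^{\ast}$. It is closed because $\omega = d\log F = \tfrac12 d\log F^2$ is exact. Finally $i_J\omega = \omega\circ J$, and $d_J\omega + 2\omega\wedge i_J\omega = \Omega$ by the very definition \eqref{omegas} (with $\alpha/\rho$ replaced by $\omega$ — indeed $i_{\mathbb{F}}(\alpha/\rho) = i_J(\alpha/\rho)$ because $\mathbb{F}$ and $J$ agree modulo horizontal terms on which the semi-basic form... here I need the identity $i_{\mathbb{F}}\beta\wedge\beta = i_J\beta\wedge\beta$ for semi-basic $\beta$, which I would verify from \eqref{acs}). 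Then by \eqref{oomega}, $\Omega = -\omega_{F^2}/(2F^2)$ has rank $4$ since $F$ is a genuine Finsler function. This gives \eqref{rank4}.

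\emph{Step 2: iii) $\Rightarrow$ ii).} Given a closed $\omega\in\mathcal{D}^{\ast}$ with $\operatorname{rank}(d_J\omega + 2\omega\wedge i_J\omega)=4$, I first argue $\omega$ is a nonzero multiple of $\alpha/\rho$: since $\mathcal{D}^{\ast}$ is the line spanned by $\alpha$, write $\omega = f\alpha$ for some function $f$; homogeneity considerations (the closed form and the scaling) should pin down that $d_J\omega + 2\omega\wedge i_J\omega$ is proportional to $\Omega$, so its rank being $4$ forces $\operatorname{rank}(\Omega)=4$, i.e. $S$ is regular by Definition~\ref{regulars}. For integrability of $\mathcal{D}$: since $\mathcal{D}=\ker\omega$ (a line annihilator of a $3$-distribution) and $\omega$ is closed, the Frobenius condition $d\omega|_{\mathcal{D}} = 0$ holds automatically — $\mathcal{D}$ is integrable because $d\omega = 0$ implies $\omega([X,Y]) = X\omega(Y)-Y\omega(X) = 0$ for $X,Y\in\mathcal{D}$. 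So this implication is essentially immediate once the identification $\mathcal{D}=\ker\omega$ is in place; the only real content is extracting regularity.

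\emph{Step 3: ii) $\Rightarrow$ i), the main obstacle.} This is the constructive heart. Assuming $S$ regular and $\mathcal{D}$ integrable, I want to produce a Finsler function. The distribution $\mathcal{D} = \ker\alpha$ is integrable, and $\alpha/\rho$ is a $0^{+}$-homogeneous (check: $\alpha$ is $1^{+}$-homogeneous, $\rho$ is $2^{+}$-homogeneous) $1$-form annihilating it; integrability plus the Poincaré lemma along leaves lets me hope to write $\alpha/\rho$ as locally closed. But I need $\alpha/\rho$ itself (not just some multiple) to be closed — this is where I would use the commutation formula $[S,V]=-H+\nabla V$ from \eqref{Berwald_s2} together with integrability, which by Lemma~\ref{Bcom_spray}(ii) and regularity gives $\alpha([H,V])\neq 0$; I expect the integrability of $\mathcal{D}=\operatorname{span}\{H,S,V\}$ to translate into $[H,S],[S,V],[H,V]$ all lying in $\mathcal{D}$, i.e. having no $\mathcal{C}$-component, which from \eqref{Berwald_s2} forces $\nabla V$ to have no $\mathcal{C}$-component, hence (via $\nabla J = 0$) $\nabla H$ has no $S$-component, hence $\alpha(\nabla H)=0$. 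Plugging into the matrix \eqref{omegab}, $\Omega$ becomes $d(\alpha/\rho)$ in the relevant block — more precisely the $\alpha(\nabla H)/\rho$ entries vanish, so $\Omega = d(\alpha/\rho)$ has the same rank, but I actually want $d(\alpha/\rho)=0$... Let me instead proceed differently: integrability of $\mathcal{D}$ gives a $0^{+}$-homogeneous function $f$ with $d(f\alpha/\rho) = 0$ locally (Frobenius), and then I set $F^2$ to solve $d_JF^2 = 2F^2 \cdot (f\alpha/\rho)$ — this is an overdetermined linear PDE for $F^2$, solvable precisely because $d(f\alpha/\rho)=0$ and $i_S(f\alpha/\rho)$ is controlled; homogeneity ($f\alpha/\rho$ is $0^{+}$-homogeneous, closed, and $i_{\mathcal{C}}$ of it times $2$ should match $\mathcal{L}_{\mathcal{C}}\log F^2$) lets me integrate along rays to define $F$. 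Then I must verify (a) $F$ is $1^{+}$-homogeneous, (b) $d_hF^2=0$ so that \eqref{isomega} holds (this uses $\alpha$ semi-basic hence $f\alpha/\rho$ kills $H$ and $S$... wait $\alpha(S)=\rho$ so $(\alpha/\rho)(S)=1\neq0$ — so $d_JF^2(S) = 2F^2 \neq 0$, meaning $S(F^2)\neq$... no, $d_JF^2(S) = (JS)(F^2)=\mathcal{C}(F^2)=2F^2$ by homogeneity, consistent, not a contradiction), and (c) regularity of $F$ follows from $\operatorname{rank}(\Omega)=4$ via \eqref{oomega}. The delicate points are the global/cocycle issues in passing from local closed forms to a global $F$, and verifying the metric tensor $g_{ij}$ from this $F^2$ genuinely has rank $2$ — I expect to reduce the latter to $\operatorname{rank}(\omega_{F^2})=4$, which by \eqref{oomega} is exactly the regularity of $\Omega$ that we assumed. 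I anticipate Step 3, specifically the construction of $F$ from the integrable distribution and checking its regularity, to be the main obstacle; everything else is bookkeeping with the Berwald frame commutation relations from Lemma~\ref{Bcom_spray}.
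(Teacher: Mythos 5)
There is a genuine gap that runs through all three steps: you misidentify the annihilator of the Berwald distribution. You repeatedly assert that ${\mathcal D}^{\ast}$ is spanned by the semi-basic form $\alpha$ and accordingly take $\omega=\alpha/\rho$. But $\alpha(S)=i_S\alpha=\rho\neq 0$ for a non-flat spray, so $\alpha$ does not annihilate $S\in{\mathcal D}$; likewise ${\mathcal D}\neq\ker d_JF$, since $d_JF(S)={\mathcal C}(F)=F\neq 0$. The correct generator of ${\mathcal D}^{\ast}$ is $\omega=i_{{\mathbb F}}(\alpha/\rho)=\alpha_i\,\delta y^i/\rho$, which annihilates the horizontal vectors $H,S$ and annihilates $V$ (because $\alpha(H)=0$) but satisfies $i_{{\mathcal C}}\omega=1$; in the metrizable case it equals $dF/F$, whereas your $\alpha/\rho$ equals $d_JF/F$, a semi-basic form that is not closed and does not lie in ${\mathcal D}^{\ast}$. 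The error is not repaired by your parenthetical identity ``$i_{{\mathbb F}}(\alpha/\rho)=i_J(\alpha/\rho)$'': that is false, since for a semi-basic $1$-form $\beta=\beta_i\,dx^i$ one has $i_J\beta=\beta\circ J=0$ (because $dx^i\circ J=0$), while $i_{{\mathbb F}}\beta=\beta_i\,\delta y^i\neq 0$. With your choice of $\omega$ the $2$-form in \eqref{rank4} degenerates to $d_J(\alpha/\rho)$ and does not coincide with $\Omega$, so neither the rank claim in your Step 1 nor the ``proportional to $\Omega$'' claim in your Step 2 goes through.

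The same misidentification derails Step 3, which you rightly flag as the heart of the matter but leave unresolved: you end up facing the overdetermined system $d_JF^2=2F^2\,f\alpha/\rho$ for $F^2$, with only a vague appeal to integrating along rays. With the correct $\omega$ the construction is short and is the actual content of the paper's argument: integrability of ${\mathcal D}$ together with the normalisation $i_{{\mathcal C}}\omega=1$, ${\mathcal L}_{{\mathcal C}}\omega=0$ gives $d\omega|_{{\mathcal D}\times{\mathcal D}}=0$ and $i_{{\mathcal C}}d\omega=0$, hence $d\omega=0$; then $\omega=df$ locally, $F=\exp(f)$ is $1^{+}$-homogeneous (from $i_{{\mathcal C}}\omega=1$), satisfies $d_hF=0$ (because $\omega$ annihilates horizontal vectors), and is regular because $i_J\omega=\alpha/\rho$ identifies $d_J\omega+2\omega\wedge i_J\omega$ with $\Omega=-\omega_{F^2}/(2F^2)$, forcing $\operatorname{rank}(\omega_{F^2})=4$. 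Your overall cycle i)$\Rightarrow$iii)$\Rightarrow$ii)$\Rightarrow$i) and the reliance on the Berwald frame are reasonable, but the argument cannot be completed until $\omega$ is taken to be the $\delta y$-type form $i_{{\mathbb F}}(\alpha/\rho)$ rather than the $dx$-type form $\alpha/\rho$.
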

\begin{proof}
For the first implication $i) \Longrightarrow ii)$ we assume that $S$
is the geodesic spray of a Finsler function $F$. We consider the
Berwald frame associated to the Finsler function $F$ as it has been
described in Section \ref{BfF}. The conclusion comes from Lemma
\ref{Bcom_finsler}. The spray $S$ is regular and, due to the
commutation formulae \eqref{Berwald_f2}, we have that the distribution
${\mathcal D}=\operatorname{span}\{H, S, V\}$ is integrable, by the Frobenius Theorem.

For the second implication $ii) \Longrightarrow iii)$ we assume that
$S$ is a regular spray and the Berwald distribution ${\mathcal
  D}=\operatorname{span}\{H, S, V\}$ is integrable. Since 
$\operatorname{rank}{\mathcal D}=3$, it follows that
$\operatorname{rank} {\mathcal
    D}^{\ast}=1$. This freedom allows us to choose a $0^{+}$-homogeneous
  $1$-form $\omega\in {\mathcal D}^{\ast}$, which means that ${\mathcal L}_{{\mathcal C}}\omega =0$. We
  fix this $1$-form with the normalisation condition $i_{{\mathbb
      C}}\omega=1$. We will prove that the $1$-form $\omega$ satisfies
  the two conditions $iii)$.

Since the Berwald distribution ${\mathcal D}$ is integrable it follows
that for any two vector fields $X, Y\in {\mathcal D}$ we have
$[X,Y]\in {\mathcal D}$ and hence $\omega\left([X,Y]\right)=0$. Therefore
$d\omega(X,Y)=0$, for all $X,Y\in {\mathcal D}$. Using Cartan's
formula, as well as the defining properties of $\omega$, we have
\begin{align*} i_{{\mathcal C}}d\omega={\mathcal L}_{{\mathcal C}}\omega -
di_{{\mathcal C}}\omega =0. \end{align*}
It follows that $d\omega=0$. 

Since $\omega\in {\mathcal D}^{\ast}$ it follows that
$\omega \circ \Phi=0$. The spray $S$ is isotropic and its Jacobi
endomorphism $\Phi$ is given by formula \eqref{isotropic}, so we get
\begin{align*}
0 = \omega\circ \left(\rho J - \alpha \otimes {\mathcal C}\right) =
  \rho i_J\omega - \left(i_{{\mathcal C}}\omega \right) \alpha =  \rho i_J\omega -
  \alpha. \end{align*}
The above calculations imply 
\begin{align}
i_J\omega =\frac{\alpha}{\rho} \textrm{\ \ and \ therefore \ } \omega =
  i_{{\mathbb F}}\frac{\alpha}{\rho}. \label{ijomega} \end{align}
Using these two formulae we can see that the $2$-form
in formula \eqref{rank4} coincides with the $2$-form \eqref{omegas}. The
regularity condition of the spray $S$ implies that condition
\eqref{rank4} is satisfied.   

For the last implication $iii) \Longrightarrow i)$ consider a closed $1$-form
$\omega\in {\mathcal D}^{\ast}$ that satisfies the condition \eqref{rank4}. 

Again, the fact that $\omega\in {\mathcal D}^{\ast}$ implies 
$\omega \circ \Phi=0$, and as we have just seen formulae \eqref{ijomega} are true. Locally,
these two formulae can be written as follows
\begin{align*}
i_J\omega =\frac{\alpha_i dx^i}{\rho} \textrm{\ \ and \ therefore \ } \omega =
  \frac{\alpha_i \delta y^i}{\rho}. \end{align*}
From these two formulae we obtain $i_{{\mathcal C}}\omega =1$ and ${\mathcal L}_{{\mathbb
  C}}\omega=0.$

Since the $1$-form $\omega$ is closed, using Poincar\'e's Lemma we conclude that there exists a locally defined function $f$ on $T_0M$
such that $\omega=df$. We define $F=\exp(f)$ and we show that
$F$ is a Finsler function, whose geodesic spray is $S$.
Observe first that 
\begin{align*} 
{\mathcal C}(f)=i_{{\mathcal C}}df=i_{{\mathcal C}}\omega = i_{{\mathbb
  C}} i_{{\mathbb F}}\frac{\alpha}{\rho} = i_{{\mathbb F} {\mathbb
  C}}\frac{\alpha}{\rho}= i_S\frac{\alpha}{\rho} =1, \end{align*}
which implies that ${\mathcal C}(F)= {\mathcal C} \left(\exp(f)\right)=\exp(f)=F$, and,
therefore, $F$ is $1^{+}$-homogeneous.  

In view of the two formulae \eqref{ijomega} we have that the $2$-form
$\Omega$, \eqref{omegas}, is given by $\Omega =  d_J\omega + 2
 \omega \wedge  i_J\omega$. Now, using the assumption \eqref{rank4}
  we obtain $\operatorname{rank}(\Omega)=4$. First formula \eqref{ijomega} can be written now as follows
\begin{align*}
\frac{\alpha}{\rho}= i_J\omega = d_Jf =\frac{d_JF}{F} =\frac{d_JF^2}{2F^2}. 
\end{align*}
Using this formula, we see that the $2$-form $\Omega$ and the
function $F$ are related by formula \eqref{oomega}. It follows that
$\operatorname{rank}(\omega_{F^2})=4$, which means that $F^2$ is
regular and hence $F$ is indeed a Finsler function.

It remains to check that $S$ is the geodesic spray of the Finsler
function, which we show by proving that $d_hF=0$. We use
again the condition $\omega\in {\mathcal D}^{\ast} $, which implies
$\omega \circ h=0$. Since $\omega=df=\frac{dF}{F}$ it follows that
$d_hf=0$ and hence $d_hF=0$, which completes the proof.
\end{proof}

Criterion $iii)$ in Theorem \ref{thm1} shows ``where to look'' for
a Finsler function in the case when a spray $S$ is metrizable. It is
enough to pick a closed $1$-form $\omega$ from the $1$-dimensional
annihilator of the Berwald distribution ${\mathcal D}$. The two
conditions ${\mathcal L}_{{\mathcal C}}\omega=0$ and $i_{{\mathbb
    C}}\omega=1$ show that the $1$-form $\omega$ is unique. Condition $d\omega=0$ is
equivalent to the integrability of the distribution ${\mathcal D}$ and provides the
Finsler function $F$ that metrizes the spray $S$, through the
condition $\omega=\frac{dF}{F}$. In this case the freedom we have for
choosing the Finsler function is $1$ and it agrees to the result in 
\cite{EM16} for non-flat isotropic spray.

Criterion $ii)$ in Theorem \ref{thm1} is more geometric and it
shows that in the integrable case, the Berwald distribution ${\mathcal
  D}$ is tangent to a hyper-surface in $TM$ that represents the
indicatrix of the Finsler function that metrizes the spray.   

\subsection{Degenerate Finsler metrizability} \label{dFm}

In this section, we pay attention to the case when a $2$-dimensional,
non-flat spray $S$ is metrizable by a degenerate Finsler
function. Similarly with the regular case, we will show in the next
theorem that all the information regarding the metrizability of a
spray are encoded again into the Berwald
distribution \eqref{dhphi} and the $2$-form \eqref{omegas}.

 \begin{thm} \label{thm2}
Let $S$ be a $2$-dimensional, non-flat spray. The
following conditions are equivalent:
\begin{itemize}
\item[i)] $S$ is metrizable by a degenerate Finsler function.
\item[ii)] The spray $S$ is degenerate and the Berwald distribution ${\mathcal D}$ is integrable.
\item[iii)] There exists a closed $1$-form $\omega \in {\mathcal
    D}^{\ast}$ such that 
\end{itemize} 
\begin{align} \operatorname{rank}\left( d_J\omega + 2
  \omega \wedge  i_J\omega\right) =2. \label{rank2}
\end{align} 
\end{thm}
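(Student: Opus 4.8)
The plan is to mirror the structure of the proof of Theorem \ref{thm1}, following the same cycle of implications $i)\Rightarrow ii)\Rightarrow iii)\Rightarrow i)$, but replacing each regularity statement by its degenerate counterpart (rank $2$ instead of rank $4$) and invoking Lemma \ref{Bcom_degfinsler} in place of Lemma \ref{Bcom_finsler}. The key observation making this possible is that the arguments in Theorem \ref{thm1} which establish $d\omega=0$, formulae \eqref{ijomega}, the normalisation $i_{\mathcal C}\omega=1$, ${\mathcal L}_{\mathcal C}\omega=0$, and the identity $\Omega=d_J\omega+2\omega\wedge i_J\omega$, never used the rank of $\Omega$: they used only that $\omega$ lies in ${\mathcal D}^{\ast}$, that $S$ is isotropic (hence \eqref{isotropic} holds), and that $\omega$ is closed. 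So the entire skeleton carries over verbatim, and the only thing that changes is how the rank is tracked through the construction.

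For $i)\Rightarrow ii)$ I would take $S$ to be the geodesic spray of a degenerate Finsler function $F$, choose the Berwald frame of Section \ref{BfdF}, and invoke Lemma \ref{Bcom_degfinsler}: part ii) gives that the spray is degenerate, and the commutation formulae \eqref{Berwald_d1} — specifically $\alpha([H,V])=\alpha([S,H])=\alpha([S,V])=0$ together with the $\nabla H$, $\nabla V$ formulae and the fact that $[H,V]$ has no ${\mathcal C}$-component — show that $\{H,S,V\}$ is closed under Lie brackets, hence ${\mathcal D}=\operatorname{span}\{H,S,V\}$ is integrable by the Frobenius Theorem. For $ii)\Rightarrow iii)$, since $\operatorname{rank}{\mathcal D}=3$ the annihilator ${\mathcal D}^{\ast}$ is $1$-dimensional; pick the unique $\omega\in{\mathcal D}^{\ast}$ with $i_{\mathcal C}\omega=1$ and ${\mathcal L}_{\mathcal C}\omega=0$. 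Integrability of ${\mathcal D}$ plus Cartan's formula gives $d\omega=0$ exactly as in Theorem \ref{thm1}; then $\omega\circ\Phi=0$ together with \eqref{isotropic} yields formulae \eqref{ijomega}, so that $d_J\omega+2\omega\wedge i_J\omega=\Omega$. Now the degeneracy of $S$ is precisely $\operatorname{rank}(\Omega)=2$, which is \eqref{rank2}.

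For the remaining implication $iii)\Rightarrow i)$ I would again repeat the construction in Theorem \ref{thm1}: from $\omega\in{\mathcal D}^{\ast}$ deduce $\omega\circ\Phi=0$ and hence \eqref{ijomega}, in particular $i_{\mathcal C}\omega=1$ and ${\mathcal L}_{\mathcal C}\omega=0$; since $\omega$ is closed, Poincar\'e's Lemma gives a local $f$ with $\omega=df$, and one sets $F=\exp(f)$. The homogeneity computation ${\mathcal C}(f)=i_S\frac{\alpha}{\rho}=1$ shows $F$ is $1^{+}$-homogeneous, and $\omega\circ h=0$ gives $d_hF=0$, i.e.\ $S$ satisfies \eqref{isomega} for $F$. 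The only point of divergence from the regular case is the regularity of $F$: here formula \eqref{oomega} still holds (its derivation used only \eqref{ar}, equivalently the first formula in \eqref{ijomega}), so $\Omega=\frac{-1}{2F^2}\omega_{F^2}$, and the hypothesis \eqref{rank2} forces $\operatorname{rank}(\omega_{F^2})=2$, which by the discussion after Definition \ref{Finsler_def} (and via \eqref{ogij}) is exactly $\operatorname{rank}(g_{ij})=1$ — i.e.\ $F$ is a degenerate Finsler function. Hence $S$ is metrizable by a degenerate Finsler function.

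I do not expect a genuine obstacle, since the proof is essentially a rerun of Theorem \ref{thm1} with "$4$" replaced by "$2$" and Lemma \ref{Bcom_finsler} replaced by Lemma \ref{Bcom_degfinsler}; the one place requiring a little care is verifying that \eqref{oomega} — and therefore the identification of $\operatorname{rank}(\Omega)$ with $\operatorname{rank}(g_{ij})+1$ — remains valid in the degenerate setting, but this is covered because its proof only invoked \eqref{ar}, which Proposition \ref{prop1} establishes for degenerate Finsler functions as well. To avoid repetition in the write-up I would state explicitly that the steps establishing $d\omega=0$, \eqref{ijomega}, and the local construction of $F$ are identical to those in the proof of Theorem \ref{thm1} and only indicate the changes in the rank bookkeeping.
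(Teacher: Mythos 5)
Your proposal is correct and follows essentially the same route as the paper: the paper's proof of Theorem \ref{thm2} likewise reuses the skeleton of Theorem \ref{thm1}, invokes Lemma \ref{Bcom_degfinsler} for $i)\Rightarrow ii)$, and tracks the rank through \eqref{oomega} for $iii)\Rightarrow i)$. The only cosmetic difference is in $ii)\Rightarrow iii)$, where the paper takes $\omega=i_{\mathbb F}\frac{\alpha}{\rho}$ directly and derives $d\omega=0$ from the Frobenius condition $d\omega=\omega\wedge\theta$, but it explicitly notes that repeating the Theorem \ref{thm1} argument (as you do) is an equally valid alternative.
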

\begin{proof}
We use similar techniques and ideas that were used in the proof of Theorem
\ref{thm1}. We will emphasise only the aspects related to the degeneracy
of the spray and the corresponding degenerate Finsler function.

For the first implication we make use of Lemma
\ref{Bcom_degfinsler}. According to the second item of this lemma, we
have that the spray $S$ is degenerate. From the commutation formulae
\eqref{Berwald_d1} we obtain 
\begin{align*} d_vF\left([S,H]\right)= d_vF\left([S,V]\right) =
  d_vF\left([H, V]\right) =0, \end{align*} which show that the vector
  fields $[S,H]$, $[S,V]$ and $[H,V]$ have no components along the
  Liouville vector field ${\mathcal C}$. Using Frobenius Theorem it
  follows that the Berwald distribution ${\mathcal
  D}=\operatorname{span} \{H, S, V\}$ is integrable. 

To prove the second implication $ii) \Longrightarrow iii)$, we assume that
$S$ is a degenerate spray and the Berwald distribution ${\mathcal
  D}=\operatorname{span}\{H, S, V\}$ is integrable. We can either
  follow the arguments of Theorem \ref{thm1} for the same
  implication, or we can just take 
\begin{align}
\omega=i_{{\mathbb F}}\frac{\alpha}{\rho}  = \frac{\alpha_i \delta
  y^i}{\rho} \in {\mathcal D}^{\ast}. \label{omega_ifar} \end{align} 
With this choice we have that $i_{{\mathcal C}}\omega=1$, ${\mathcal L}_{{\mathbb
    C}}\omega=0$, which implies $i_{{\mathcal C}}d\omega=0$. Using the
assumption that the Berwald distribution ${\mathcal D}$ is integrable it
follows that there exists $\theta\in \Lambda^1(T_0M)$ such that
$d\omega=\omega \wedge \theta$. We evaluate both sides of this formula
on the Liouville vector field ${\mathcal C}$ to obtain $0=i_{{\mathbb
    C}} d\omega = \theta - \left(i_{{\mathbb  C}}\theta\right)
\omega$, whence $\theta=\left(i_{{\mathcal C}}\theta \right)\omega$ and
therefore $d\omega=0$. 

Using the expression of the $1$-form $\omega$ it follows that the
$2$-form in formula \eqref{rank2} coincides with the $2$-form
\eqref{omegas}. The assumption that the spray $S$ is degenerate
implies that $\operatorname{rank}(\Omega)=2$ and hence formula
\eqref{rank2} is valid.

To prove the last implication, we consider a closed $1$-form $\omega \in {\mathcal
  D}^{\ast}$ that satisfies formula \eqref{rank2}. Condition $\omega \in {\mathcal
  D}^{\ast}$ implies that $\omega$ is actually given by formula
\eqref{omega_ifar}. Similarly as in the proof of Theorem \ref{thm1},
we have that there exists a locally defined function $f$ on $T_0M$
such that $\omega=df$. The function $F=\exp(f)$ is $1^{+}$-homogeneous and
satisfies $d_hF=0$. Formula \eqref{rank2} implies that
$\operatorname{rank}(\Omega)=2$, and in view of formula \eqref{oomega}
it follows that $\operatorname{rank}(\omega_{F^2})=2$. Therefore $F$
is a degenerate Finsler function that metrizes the given spray $S$. 
\end{proof}

One can provide an equivalent reformulation of the Finsler
metrizability criteria of Theorems \ref{thm1} and \ref{thm2} using the holonomy
distribution ${\mathcal D}_{\mathcal H}$ introduced in
\cite{Muzsnay06}, as follows. The spray $S$ is metrizable by a
(degenerate) Finsler function if and only if the Berwald distribution
${\mathcal D}$ is integrable, which is equivalent to ${\mathcal
  D}={\mathcal D}_{\mathcal H}$. In this case, the metrizability
freedom of the spray $S$, \cite[Proposition 4.9]{EM16}, is
$m_s=\operatorname{codim}{{\mathcal D}_{{\mathcal H}}}
  =\operatorname{codim}{\mathcal D}=1$. Therefore, if a non-flat spray
  $S$ is metrizable, then the corresponding (degenerate) Finsler function is unique,
  up to a multiplicative constant.

\section{Examples} \label{examples}

In this section we exemplify the main results of our work. We will use both
criteria $ii)$ and $iii)$ in Theorems \ref{thm1} and \ref{thm2} to
test the metrizability of the given spray and to show how one can find the corresponding
(degenerate) Finsler function in the integrable case. 

\subsection{Metrizable sprays}

\subsubsection{The regular case.} We will use the next example to show
how the algorithms described in the proof of Theorem \ref{thm1} can be applied to test the Finsler metrizability of a spray and, in the
affirmative case, to construct the Finsler function. 

On $M=\{(x^1, x^2)\in {\mathbb R}^2, x^2>0\}$ we consider the SODE
\begin{align}
\frac{d^2x^1}{dt^2} - \frac{2}{x^2} \frac{dx^1}{dt}\frac{dx^2}{dt} =
0, \quad \frac{d^2 x^2}{dt^2} +\frac{1}{x^2}\left(
  \left(\frac{dx^1}{dt}\right)^2 -  \left(\frac{dx^2}{dt}\right)^2
\right) = 0. \label{sodeP}
\end{align}
The components of the induced nonlinear connection are given by
\begin{align} 
N_1^1=-\frac{y^2}{x^2}, \quad N_2^1=-\frac{y^1}{x^2}, \quad
  N_1^2=\frac{y^1}{x^2}, \quad N_2^2=-\frac{y^2}{x^2}. \label{nijP}
\end{align}
The local components of the Jacobi endomorphism are
\begin{align*}
R^1_1= -\frac{(y^2)^2}{(x^2)^2}, \quad R^1_2 = R^2_1 =
\frac{y^1y^2}{(x^2)^2}, \quad R^2_2=
-\frac{(y^1)^2}{(x^2)^2}. \end{align*}
The spray $S$ is isotropic, the components of the semi-basic
$1$-form $\alpha$ and the Ricci scalar are
\begin{align*}
\alpha_1=\frac{R^2_2}{y^1}=-\frac{y^1}{(x^2)^2}, \quad
\alpha_2=\frac{R^1_1}{y^2}=-\frac{y^2}{(x^2)^2}, \quad \rho=R^1_1+R^2_2=-\frac{1}{(x^2)^2}\{(y^1)^2+(y^2)^2\}. \end{align*}
We test first the metrizability of the spray using criterion
$iii)$ of Theorem \ref{thm1}.
All the information about the Finsler metrizability of the spray are
encoded into the $1$-form
\begin{align*}
\omega=i_{\mathbb{F}}\frac{\alpha}{\rho}=\frac{\alpha_1}{\rho}\delta
  y^1+\frac{\alpha_2}{\rho}\delta y^2 =
  \frac{y^1dy^1+y^2dy^2}{(y^1)^2+(y^2)^2}-\frac{1}{x^2}dx^2. \end{align*}
We have that 
\begin{align*} 
\Omega=\frac{-1}{(y^1)^2+(y^2)^2}\left(dx^1\wedge dy^1 + dx^2 \wedge
  dy^2\right), \end{align*}
so $\operatorname{rank}(\Omega)=4$ and hence $S$ is a
regular spray. Moreover,  $d\omega=0$ and hence $\omega = df$, for
$f(x,y)=\frac{1}{2}\ln\left((y^1)^2+(y^2)^2\right) -
\ln(x^2)$.
 It follows that $S$ is metrizable by the Finsler function
\begin{align} F(x,y)=\exp(f(x,y)) =
  \frac{\sqrt{(y^1)^2+(y^2)^2}}{x^2}, \label{FP}\end{align}
which is the Poincar\'e metric on the half-plane $M$. 

Now, we will check again the metrizability of the SODE \eqref{sodeP} using the
second criterion of Theorem \ref{thm1}. Consider a vector field $H\in
{\mathfrak X}(T_0M)$ satisfying conditions
\eqref{Berwald_s1}. One can choose such a vector field to be
$H=-y^2\delta/{\delta x^1} + y^1\delta /{\delta x^2}$. Therefore, the
Berwald frame $(H, S, V=JH)$ generates the Berwald distribution
${\mathcal D}$. From the following Lie brackets, we can see directly
that this distribution is integrable:
\begin{align*}
[H,V]=S, \quad [S,V]=-H, \quad [S,H]=\rho V.
\end{align*}
We want to find now the integral manifold $IM$ to the Berwald distribution
${\mathcal D}=\operatorname{Im}(h)\oplus \operatorname{Im}(\Phi)$. We
will search for the manifold $IM$ using the fact that it contains all
horizontal curves and the curves tangent to the vertical vector field $V$.     

A vertical curve $\gamma_v(t)=(x^i, y^i(t))$ is tangent to the vector field
$V$ if and only if 
\begin{align*}
\frac{dy^1}{dt} = -y^2, \quad \frac{dy^2}{dt} = y^1.
\end{align*}
With the initial condition $\gamma_v(0)=(x^i, y^i)$, we obtain that the curve
$\gamma_v(t)=(x^1, x^2, y^1\cos t - y^2 \sin t, y^1 \sin t - y^2 \cos t)$
belongs to the family of hyper-surfaces
\begin{align}
(y^1)^2+(y^2)^2=f(x^1,x^2), \label{hsP}
\end{align}
for some arbitrary function $f$ on the base manifold $M$. 

We will restrict the family of hyper-surfaces \eqref{hsP} by requiring
them to contain also horizontal curves. A curve $\gamma_h(t)=(x^i(t), y^i(t))$ is horizontal if and only if $v(\dot{\gamma_h}(t))=0$ and
therefore it satisfies the system of second order ordinary
differential equations
\begin{align}
\frac{d^2x^i}{dt^2} + N^i_j\frac{dx^j}{dt}= 0. \label{horP}
\end{align}
For the nonlinear connection \eqref{nijP} the system \eqref{horP}
becomes
\begin{align*}
\frac{dy^1}{dt}  -\left(\frac{y^2}{x^2}\frac{dx^1}{dt} +
  \frac{y^1}{x^2}\frac{dx^2}{dt} \right) =0, \quad 
\frac{dy^2}{dt}  + \frac{y^1}{x^2}\frac{dx^1}{dt} -
  \frac{y^2}{x^2}\frac{dx^2}{dt} =0.  
\end{align*}
We multiply the first equation by $y^1$, the second equation by
$y^2$, we add them to obtain
\begin{align*}
y^1\frac{dy^1}{dt} + y^2\frac{dy^2}{dt} - \frac{1}{x^2} \left(
  (y^1)^2+(y^2)^2\right) \frac{dx^2}{dt} =0. \end{align*}
Last equation can be written as 
\begin{align}
\frac{d}{dt}\left((y^1)^2 + (y^2)^2\right)  - \frac{2}{x^2}\left((y^1)^2 + (y^2)^2\right) \frac{dx^2}{dt} =0. \label{intP}
\end{align}
We want the horizontal curves to belong to the family of
hyper-surfaces \eqref{hsP}. Therefore, if we substitute the equations
\eqref{hsP} into \eqref{intP} we obtain 
\begin{align*}
\frac{d}{dt}\left(f(x^1, x^2)\right) - \frac{2}{x^2} f(x^1,
  x^2)\frac{dx^2}{dt} =0, \end{align*} which implies $f(x^1,
x^2)=c(x^2)^2$, $c\in {\mathbb R}^{\ast}$. Therefore the integral manifold of the Berwald
distribution ${\mathcal D}$ is given by 
\begin{align*}
IM=\{(x^i, y^i) \in T_0M, \frac{1}{(x^2)^2} \left((y^1)^2 +
  (y^2)^2\right) =c \}, \end{align*}
which represents the indicatrix of the Poincar\'e metric \eqref{FP}.

\subsubsection{The degenerate case.} The next example is a degenerate
spray. We will test its metrizability and obtain the corresponding
degenerate Finsler function using the methods provided by the two
criteria of Theorem \ref{thm2}.

On $M={\mathbb R}^2$, consider the SODE
\begin{align}
\frac{d^2x^1}{dt^2}+\frac{x^2}{1+(x^2)^2}\frac{dx^1}{dt}\frac{dx^2}{dt}=0,\quad
  \frac{d^2x^2}{dt}=0. \label{soded}
\end{align}
The coefficients of the nonlinear connection are
\begin{align}
N_1^1=\frac{x^2y^2}{2(1+(x^2)^2)},\
  N_2^1=\frac{x^2y^1}{2(1+(x^2)^2)},\ N_1^2=N_2^2=0. \label{nijd}
\end{align}
The local components of the Jacobi endomorphism and the Ricci scalar
are 
\begin{align*} R_1^1=\frac{(y^2)^2[(x^2)^2-2]}{4[(x^2)^2+1]^2}, \quad
  R_2^2=0, \quad \rho=\frac{(y^2)^2[(x^2)^2-2]}{4[(x^2)^2+1]^2}.\end{align*}
The semi-basic 1-form $\alpha/\rho$ has the components
\begin{align*}
\frac{\alpha_1}{\rho}=\frac{R_2^2}{y^1\rho}=0, \quad
  \frac{\alpha_2}{\rho}=\frac{R_1^1}{y^2\rho}=\frac{1}{y^2}. \end{align*}
The information about the metrizability and the regularity of the
spray are encoded into the $1$-form
\begin{align*}
\omega=i_{\mathbb{F}}\frac{\alpha}{\rho}=\frac{\alpha_1}{\rho}\delta
  y^1+\frac{\alpha_2}{\rho}\delta y^2 =
  \frac{1}{y^2}dy^2. \end{align*}
It follows that the corresponding $2$-form \eqref{omegas} is given by
\begin{align*}
\Omega=\frac{-1}{(y^2)^2}dx^2\wedge dy^2, \end{align*}
so $\operatorname{rank}\left(\Omega\right)=2$ and hence the spray $S$ is degenerate.
We also have that $d\omega=0$, since $\omega = df$, for
$f(x,y)=\ln|y^2|$. Hence the degenerate Finsler function that
metrizes the given spray is $F(x,y)=\exp(f(x,y))=|y^2|$.

We want now to test the metrizability of the system \eqref{soded}
using the second criterion of Theorem \ref{thm2}. For the given
system, we construct a Berwald frame using the conditions
\eqref{Berwald_s1}. If we choose $H=-y^2\frac{\delta}{\delta x^1}$,
then the Lie brackets of the vector fields $S, H$ and $V=JH$ are
\begin{align*}
[H,V]=0, \quad [S,V]=-H+ \frac{y^2x^2}{2(1+(x^2)^2)} V, \quad 
[S,H]=\frac{y^2x^2}{2(1+(x^2)^2)}H+\rho V.
\end{align*}
It follows that the Berwald distribution is integrable and hence the
system \eqref{soded} is metrizable by a degenerate Finsler function. 
If we compute the integral manifold of the Berwald distribution, we
obtain that it is given by 
\begin{align*}
IM=\{(x^1, x^2, y^1, y^2)\in T_0M, y^2=c\}, \end{align*}  which
represents the indicatrix of the degenerate Finsler function $F(x,y)=|y^2|$.

\subsection{Non-metrizable sprays} 

The following is an example of a spray whose metrizability freedom is
$0$ and its has been proposed by Elgendi and Muzsnay in
\cite{EM16}. We will also show that the spray is not Finsler metrizable, using different
techniques provided by Theorems \ref{thm1} and \ref{thm2}.

 On $M=\{(x^1, x^2) \in {\mathbb R}^2, x^2>0\}$, we consider the
 spray 
\begin{align*} S=y^1\frac{\partial}{\partial
  x^1}+y^2\frac{\partial}{\partial x^2}-2\bigg(\varphi
  y^1+\frac{y^1y^2}{2x^2}\bigg)\frac{\partial}{\partial
  y^1}-2\bigg(\varphi
  y^2-\frac{(y^1)^2}{4}\bigg)\frac{\partial}{\partial y^2},\end{align*}  where we use the notation $\varphi=(x^2(y^1)^2+(y^2)^2)^{1/2}.$

The coefficients of  the nonlinear connection are
\begin{align*}
  N_1^1=\frac{y^2}{2x^2}+\varphi+\frac{x^2(y^1)^2}{\varphi},\
  N_2^1=\frac{y^1}{2x^2}+\frac{y^1y^2}{\varphi},\
  N_1^2=-\frac{y^1}{2}+\frac{x^2y^1y^2}{\varphi},\
  N_2^2=\varphi+\frac{(y^2)^2}{\varphi}, \end{align*}
while the local components of the corresponding Jacobi endomorphism
and Ricci scalar are
\begin{align*} R_1^1=\frac{(y^2)^2[4(x^2)^2+1]}{4(x^2)^2},\
  R_2^2=\frac{(y^1)^2[4(x^2)^2+1]}{4(x^2)^2},\
  \rho=((y^1)^2+(y^2)^2)\frac{[4(x^2)^2+1]}{4(x^2)^2}.\end{align*}
The spray $S$ is isotropic and the semi-basic 1-form $\alpha/\rho$ has the components
\begin{align*}
\frac{\alpha_1}{\rho}=\frac{y^1}{(y^1)^2+(y^2)^2}, \quad
  \frac{\alpha_2}{\rho}=\frac{y^2}{(y^1)^2+(y^2)^2}.
\end{align*}
Having the form for $\alpha/\rho$, we compute the $1$-form 
\begin{align*}
\omega=i_{{\mathbb F}}\frac{\alpha}{\rho}=\frac{\alpha_1}{\rho}\delta
  y^1+\frac{\alpha_2}{\rho}\delta y^2.
\end{align*}
In the above formula, we replace $\delta y^i=d y^i+N_j^idx^j$, and by
a direct calculation we find that $d\omega\neq 0$. Therefore, the two Theorems
\ref{thm1} and \ref{thm2} tell us that  our spray $S$ is not Finsler metrizable. 

We can reach the same conclusion by using the Berwald frame. 
For the given spray $S$, we search for a horizontal vector field
that satisfies equations \eqref{Berwald_s1}. In other words we
search for $H=H^1{\delta}/{\delta x^1} + H^2{\delta}/{\delta x^2}$
that satisfies $\alpha(H)=0$ and  $[\mathbb{C},H]=H$.
In order to check the second criterion of Theorems \ref{thm1} and
\ref{thm2}, we can take any solution of the above system. Such a
solution is given by $H^1=-y^2$ and $H^2=y^1.$ We consider the
vertical vector field $V=JH$. According to last
formula in \eqref{Berwald_s2} we have that $[S,V]=-H+\nabla V$. If we compute
the dynamical covariant derivative $\nabla V$ we obtain that it has a
non-vanishing component along the Liouville vector field ${\mathcal
  C}$. Therefore, the Berwald distribution ${\mathcal D}=\operatorname{span}\{H, S, V\}$ is not integrable and hence the spray $S$ is
not Finsler metrizable. The same conclusion follows using the metrizability
criterion introduced by Muzsnay in \cite{Muzsnay06}, since from the
above Lie bracket we have that ${\mathcal C}\in {\mathcal D}_{\mathcal H}$.

\subsection*{Acknowledgments} We express our thanks to Professor J\'ozsef Szilasi for the careful
reading of the manuscript and the suggestions that led to this
version.  E.T has been supported by ``Erasmus+ Mobility Project for Higher Education
Students and Staff with Partner Countries'' and thanks Professor Nabil
L. Youssef for his continuous encouragement.


\begin{thebibliography}{}  


\bibitem{AIM93} {Antonelli, P.L., Ingarden, R.S., Matsumoto, M.:}
  \emph{The Theory of Sprays and Finsler Spaces with Applications in
    Physics and Biology}, Kluwer, 1993.
\bibitem{BF06}{Bejancu, A., Farran, H.R.:} \emph{Finsler geometry and
    natural foliations on the tangent bundle}, Rep. Math. Phys.,
  \textbf{58} (2006), 131--146.
\bibitem{Berwald41} {Berwald, L.}: \emph{On Finsler and Cartan
    geometries. III Two-dimensional Finsler spaces with rectilinear
    extremals}, Annals of Math., \textbf{42} (1941), 84--112.  
\bibitem{BCS00} {Bao, D., Chern, S.-S., Shen, Z.}: \emph{An introduction to Riemann-Finsler geometry}, Springer, 2000.
\bibitem{BD09} {Bucataru, I., Dahl, M.}: \emph{Semi-basic 1-forms and Helmholtz
conditions for the inverse problem of the calculus of variations},
J. Geom. Mech., \textbf{1} (2009), 159--180.
\bibitem{BM14}{Bucataru, I., Muzsnay, Z.}: \emph{Finsler Metrizable isotropic
    sprays and Hilbert's fourth problem},
  J. Aust. Math. Soc, \textbf{97}  (2014), 27--47.
\bibitem{Crampin14}{Crampin, M.}: \emph{Finsler functions for
    two-dimensional sprays}, Publ. Math. Debrecen, \textbf{85}
  (2014), 435--452. 
\bibitem{Darboux94} {Darboux, G.}: \emph{Le\c cons sur la th\'eorie
    g\'en\'erale des surfaces et les applications g\'eom\'etriques du
    calcul infinitésimal}, Gauthier--Villars, 1894.
\bibitem{Douglas41} {Douglas, J.}: \emph{ Solution of the inverse
    problem of the calculus of variations},
  Trans. Amer. Math. Soc. \textbf{50} (1941), 71--128. 
\bibitem{EM16}{Elgendi, S.G., Muzsnay, Z.:} \emph{Freedom of
    h(2)-variationality and metrizability of sprays},
  arXiv:1609.06581. 
\bibitem{Grifone72a} {Grifone, J.}: \emph{Structure presque tangente et
    connections I}, Ann. Inst. Fourier, \textbf{22}  (1972),
  287--334.
\bibitem{GM00}{Grifone, J., Muzsnay, Z.:} \emph{Variational Principles
    For Second-Order Differential Equations}, World Scientific, Singapore,
  2000.
\bibitem{KS85} {Krupka, D., Sattarov, A.E.:} \emph{The inverse problem
    of the calculus of variations for Finsler structures},
  Math. Slovaca, \textbf{35} (1985), 217--222.
\bibitem{Muzsnay06}{Muzsnay, Z.:} \emph{The Euler-Lagrange PDE and Finsler
    metrizability}, Houston Journal of Mathematics, \textbf{32}  (2006), 79--98.
\bibitem{Shen01}{Shen, Z.:} \emph{Differential geometry of spray and
Finsler spaces}, Springer, 2001.
\bibitem{Szilasi03} {Szilasi, J.:} \emph{A setting for spray and Finsler
    geometry}, in "Handbook of Finsler Geometry" (ed. P.L. Antonelli), Kluwer Acad. Publ.,
    Dordrecht, \textbf{2} (2003), 1183--1426.
\bibitem{SLK14} {Szilasi, J., Lovas, R., Kert\'esz, D.:}
  \emph{Connections, sprays and Finsler structures}, World Scientific, 2014.
\bibitem{SV02}{ Szilasi, J., Vattam\'any, S.:} \emph{On the
    Finsler-metrizabilities of spray manifolds},
  Period. Math. Hungar., \textbf{44} (2002), 81--100.
\bibitem{VV01}{Vattam\'any, S., Vincze, C.:} \emph{Two-dimensional
    Landsberg manifolds with vanishing Douglas tensor},
  Ann. Univ. Sci. Budapest., \textbf{44} (2001), 11--26.

\end{thebibliography}
\end{document}